\renewcommand{\caption}[1]{\singlespacing\hangcaption{#1}\normalspacing}
\theoremstyle{definition}
\newtheorem{theorem}{Theorem}[section]
\newtheorem{definition}[theorem]{Definition}
\newtheorem{lemma}[theorem]{Lemma}
\newtheorem{corollary}[theorem]{Corollary}
\newtheorem{proposition}[theorem]{Proposition}
\newtheorem{remark}[theorem]{Remark}
\newtheorem{conjecture}[theorem]{Conjecture}
\newcommand{\RR}{{\mathbb R}}
\newcommand{\CC}{{\mathbb C}}
\newcommand{\NN}{{\mathbb N}}
\newcommand{\thetaA}[1]{{#1}(\theta)}
\newcommand{\thetaB}[1]{{#1}(\bar{\theta})}
\newcommand{\prodd}[2]{\displaystyle \prod_{{#1}=1}^{#2}d_{#1}}
\newcommand{\sumd}[2]{\displaystyle \sum_{{#1}=1}^{#2}d_{#1}}
\newcommand{\prodl}[3]{\displaystyle \prod_{{#1}=1}^{#2}\lambda_{#1}^{#3}}
\newcommand{\sumw}[3]{\sum_{{#1}=0}^{#2}{#3}^{#1}_n}
\newcommand{\sierp}{\text{Sierpi\'{n}ski}}
\title{Counting Spanning Trees on Fractal Graphs}
\author{Jason A. Anema}
\begin{document}

\maketitle

\begin{abstract}
Using the method of spectral decimation and a modified version of Kirchhoff's Matrix-Tree Theorem, a closed form solution to the number of spanning trees on approximating graphs to a fully symmetric self-similar structure on a finitely ramified fractal is given in Theorem (\ref{thm:maintheoremfull}). Examples calculated include the $\sierp$ Gasket, a non-p.c.f. analog of the $\sierp$ Gasket, the Diamond fractal, and the Hexagasket. For each example, the asymptotic complexity constant is found. 

Dropping the fully symmetry assumption, it is shown that the limsup and liminf of the asymptotic complexity constant exist. 

\end{abstract}

\section{Introduction}
 The problem of counting the number of spanning trees in a finite graph dates back more than 150 years. It is one of the oldest and most important graph invariants, and has been actively studied for decades. Kirchhoff's famous Matrix-Tree Theorem \cite{Ki1847}, appearing in 1847, relates properties of electrical networks and the number spanning trees. There are now a large variety of proofs for the Matrix-Tree Theorem, for some examples see \cite{Bo98, Ch82, HP73}. Counting spanning trees is a problem of fundamental interest in mathematics \cite[e.g.]{Bi74, We93, BP93, Ly05, BM08} and physics \cite[e.g.]{Wu77, Zi06, Fo72, Wu82, Dh06}. Its relation to probability theory was explored in \cite{Ly10, Me05}. It has found applications in theoretical chemistry, relating to the enumeration of certain chemical isomers \cite{Br96}, and as a measure of network reliability in the theory of networks \cite{Co87}. 

Recently, S.C. Chang et al. studied the number of spanning trees and the associated asymptotic complexity constants on regular lattices in \cite{CS06,CW06,SW00,Tz00}. These types of problems led them to consider spanning trees on self-similar fractal lattices, as they exhibit scale invariance rather than translation invariance. In \cite{CCY07} S.C. Chang, L.C. Chen, and W.S. Yang calculate the number of spanning trees on the sequence of graph approximations to the $\sierp$ Gasket of dimension two, three and four, as well as for two generalized $\sierp$ Gaskets ($SG_{2,3}(n)$ and $SG_{2,4}(n)$), and conjecture a formula for the number of spanning trees on the $d-dimensional$ $\sierp$ Gasket at stage $n$, for general $d$. Their method of proof uses a decomposition argument to derive multi-dimensional polynomial recursion equations to be solved. Independently, that same year, E. Teufl and S. Wagner \cite{TeWa06} give the number of spanning trees on the  $\sierp$ Gasket of dimension two at stage $n$, using the same argument. In \cite{TeWa07} they expand on this work, contructing graphs by a replacement procedure yielding a sequence of self-similar graphs (this notion of self-similarity is different than in \cite{Ki01}), which include the $\sierp$ graphs. For a variety of enumeration problems, including counting spanning trees, they show that their construction leads to polynomial systems of recurrences and provide methods to solve these recurrences asymptotically. 
Using the same construction technique in \cite{TeWa11}, they give, under the assumptions of $strong$ $symmetry$ (see \cite[ section 2.2]{TeWa11}) and $connectedness$, a closed form equation for the number of spanning trees \cite[ Theorem 4.2]{TeWa11}. This formulation requires calculating the resistance scaling factor and the tree scaling factor (defined in \cite[Theorem 4.1]{TeWa11}). In Section 8.3.1 they show that the $d-dimensional$ $\sierp$ Gasket at stage $n$, satisfies their assumptions and prove the conjecture of \cite{CCY07}. $Strong$ $Symmetry$ is a condition which must be satisfied on each level of construction, whereas the $full$ $symmetry$ condition, that will be assumed in the present work, is only a condition on the first level of construction. The Hexagasket, found in section \ref{section:hexa}, is an 
example of a graph sequence that is fully symmetric, and not strongly symmetric.

 In \cite{MR2450694, MR2451619} B. Steinhurst, A. Teplyaev,  et al., describe the method of spectral decimation 
  for self-similar fully symmetric finitely ramified fractals, which shows how to explicitly calculate the spectrum 
  of the Laplacian on such fractals, generalizing the ideas of \cite{FS92,Sh93}. The central result of the present
  work, Theorem (\ref{thm:maintheoremfull}), relies on their paper to describe how to calculate, in an analytic fashion, the number of spanning trees of 
  the sequence of graph approximations to such fractals. The idea is that the number of spanning trees on finite, connected, loopless, graph
is given by a normalized product of the non-zero eigenvalues of the graph's probabilistic Laplacian. 
The fractal graphs considered here have the advantage that we can calculate those eigenvalues explicitly, as
preiterates of a particular rational function. This enables one to be able to calculate their product explicitly, and 
hence calculate the number of spanning trees explicitly. Section 2 of this work will set up some notation and describe
the class of fractal graphs that will be considered. In Section 3 the main result of this work is presented. Theorem (\ref{thm:maintheoremfull})
enables one to write down a closed formula for the number of spanning trees on the class of fractal graphs considered. 
A nice Corollary of this is the fact that such formulas remain simple. In section 4, it is shown that if
we drop the assumption of full symmetry, then the limsup and liminf of the asymptotic complexity constant exist. This section 
ends with a few related conjectures. Section 5 begins with a well known example, the $\sierp$ Gasket. This 
is done to show how to use Theorem (\ref{thm:maintheoremfull}). The section continues to calculate the 
number of spanning trees, and the asymptotic complexity constant, for the graph approximations to a non-p.c.f.
analog of the $\sierp$ Gasket, the Diamond fractal, and the Hexagasket. The author would like to thank Benjamin Steinhurst, Robert Strichartz, and Alexander Teplyaev for their helpful 
conversations and comments.  
  
\section{Background and Preliminaries}
\subsection{Graph and Probabilistic Graph Laplacians}
Kirchhoff's Matrix-Tree Theorem relates a normalized product of the non-zero eigenvalues of the graph Laplacian to the number 
of spanning trees of a loopless connected graph, since fractal graphs are always connected, and loopless we will make 
this assumption henceforward. However, using the method of spectral decimation one is only able to 
find the eigenvalues of the probabilistic graph Laplacian for a specified class of fractal graphs, so a suitable 
version of Kirchhoff's theorem for probabilistic graph Laplacians must be found. Working in that direction, recall
that for any graph $T$ $=$ $(V,E)$ having $n$ labelled vertices $v_1,v_2,...,v_n$, with vertex set $V$ and edge set $E$.
The graph Laplacian $G$ on $T$ is defined by $G=D-A$, where $D=((d_{ij}))$ is the degree matrix on T with
$d_{ij}=0$ for $i\neq j$ and $d_{ii}=deg(v_i)$, and $A=((a_{ij}))$ is the adjacency matrix on T with $a_{ij}$ is the
number of copies of $\{v_i,v_j\}\in E$. The probabilistic graph Laplacian of $T$ is defined by $P=D^{-1}G$.
Let $I$ be the $n\times n$ identity matrix, $$\chi(G)=|G-xI|=\sum_{i=0}^n c_i^G x^i,$$ and $$\chi(P)=|P-xI|=\sum_{i=0}^{n}c_i^P x^i,$$
be the characteristic polynomials of $G$ and $P$, respectively. Let $S:=\{1,2,...,n-1,n\}$.  
If $\theta\subseteq S$, then let $\bar{\theta}$ denote the complement of $\theta$ in $S$.  For any $n\times n$
 matrix $C$ and any $\theta\subseteq S$, let $C(\theta)$ denote the principal submatrix of $C$ formed by deleting 
all rows and columns not indexed by an element of $\theta$. From \cite{collingsB}, we have that for any $m\times m$ diagonal
matrix $B$, and any $m\times m$ matrix $C$, $$|B+C|=\sum_{\theta\subseteq S} |\thetaB{B}|\cdot |\thetaA{C}|,$$
where the summation is over all subsets $S=\{1,...,m\}$. Using this observation and expanding term by term it 
follows that 


\begin{equation}\label{coeffG}
c_{n-i}^G=(-1)^{n-i}\sum_{|\theta|=i}|\thetaA{D}|\cdot |\thetaA{P}|
\end{equation}
and
\begin{equation}\label{coeffP}
c_{n-i}^P=(-1)^{n-i}\sum_{|\theta|=i}|\thetaA{P}|.
\end{equation}

Now, assume that $T$ is connected and loopless, expand these polynomials, compare $c_1^G$ with $c_1^P$ and apply
Kirchhoff's Matrix Tree Theorem and you will arrive that the following theorem, originally shown in \cite{RS74}. 
This is the version of the Matrix-Tree Theorem that will be used in this work. 

\begin{theorem}[Kirchhoff's Matrix-Tree Theorem for Probabilistic Graph Laplacians]\label{thm:matrixtree}  For any connected, loopless graph $T$ with $n$ labelled vertices, the number of spanning trees of $T$ is
$$\tau(T)=\left|\frac{\left(\prodd{j}{n}\right)}{\left(\sumd{j}{n}\right)}\left(\prodl{j}{n-1}{P}\right)\right|,$$
where $\{\lambda_j^P\}_{j=1}^{n-1}$ are the non-zero eigenvalues of $P$.
\end{theorem}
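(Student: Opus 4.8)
The plan is to deduce the formula from the classical Matrix-Tree Theorem for the combinatorial graph Laplacian $G$, using the coefficient identities (\ref{coeffG}) and (\ref{coeffP}) and the fact that $D$ is diagonal. Since $T$ is connected, $0$ is a simple eigenvalue of $G$; and because $G$ and $P=D^{-1}G$ have the same (one-dimensional) kernel, spanned by the all-ones vector, and $P$ is similar to the positive semidefinite matrix $D^{-1/2}GD^{-1/2}$, it is also a simple eigenvalue of $P$. Writing $\chi(P)$ as a product over eigenvalues, $\chi(P)(x)=-x\prod_{j=1}^{n-1}(\lambda_j^P-x)$, and comparing the coefficient of $x^1$ gives $c_1^P=-\prod_{j=1}^{n-1}\lambda_j^P$ (and likewise $c_1^G=-\prod_{j=1}^{n-1}\lambda_j^G$, which will reproduce the classical statement as a sanity check). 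So it suffices to compute $c_1^P$.

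I would then specialize (\ref{coeffP}) and (\ref{coeffG}) to $i=n-1$: the only subsets $\theta\subseteq S$ with $|\theta|=n-1$ are the complements $S\setminus\{k\}$ of singletons, for which $|D(S\setminus\{k\})|=\prod_{j\ne k}d_j=\left(\prod_{j=1}^{n}d_j\right)/d_k$, so
$$c_1^P=-\sum_{k=1}^{n}|P(S\setminus\{k\})|, \qquad c_1^G=-\sum_{k=1}^{n}|D(S\setminus\{k\})|\,|P(S\setminus\{k\})|.$$
The structural point is that, since $D$ is diagonal, the principal submatrix of $G=DP$ on the index set $S\setminus\{k\}$ factors as $G(S\setminus\{k\})=D(S\setminus\{k\})\,P(S\setminus\{k\})$, whence $|G(S\setminus\{k\})|=|D(S\setminus\{k\})|\cdot|P(S\setminus\{k\})|$. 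By the classical Kirchhoff theorem every principal $(n-1)\times(n-1)$ minor of $G$ (equivalently, every diagonal cofactor) equals $\tau(T)$, so $|P(S\setminus\{k\})|=\tau(T)/|D(S\setminus\{k\})|=d_k\,\tau(T)/\prod_{j=1}^{n}d_j$ for each $k$.

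Substituting this into the expression for $c_1^P$ and summing over $k$ gives
$$\prod_{j=1}^{n-1}\lambda_j^P=-c_1^P=\sum_{k=1}^{n}|P(S\setminus\{k\})|=\frac{\tau(T)}{\prod_{j=1}^{n}d_j}\sum_{k=1}^{n}d_k,$$
and solving for $\tau(T)$ yields $\tau(T)=\dfrac{\prod_{j=1}^{n}d_j}{\sum_{j=1}^{n}d_j}\prod_{j=1}^{n-1}\lambda_j^P$, which is the claimed identity; the absolute-value bars in the statement are harmless, since $\tau(T)$, the degrees $d_j$, and the eigenvalues of $P$ are all nonnegative. I do not anticipate a serious obstacle: the two places that deserve care are the factorization $G(S\setminus\{k\})=D(S\setminus\{k\})P(S\setminus\{k\})$, which is immediate from writing out entries and using that $D$ is diagonal, and the sign-tracking in passing among the $c_i$'s, the eigenvalue products, and the minor expansions (\ref{coeffG})--(\ref{coeffP}) -- any leftover sign being absorbed by the absolute value. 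The real input is the classical Matrix-Tree Theorem together with the diagonal structure of $D$; no analytic or limiting argument enters.
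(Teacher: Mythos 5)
Your proposal is correct and follows essentially the same route the paper intends: it uses the coefficient identities (\ref{coeffG})--(\ref{coeffP}) at $i=n-1$, the factorization $G(\theta)=D(\theta)P(\theta)$ for principal submatrices, and the classical Matrix-Tree Theorem to compare $c_1^G$ with $c_1^P$. You have simply filled in the details (including the simplicity of the zero eigenvalue of $P$) that the paper leaves as a sketch.
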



\subsection{Fractal Graphs}
  Let $(X,d)$ be a complete metric space. If $f_i: X \rightarrow X$ is a contraction with respect to the metric $d$ for $i=1,2,...\ m,$ then there exist a unique non-empty compact subset $K$ of $X$ that satisfies 
$$K=f_1(K)\cup \cdot \cdot \cdot \cup f_m(K).$$
K is called the $self$-$similar\ set\ with\ respect\ to\ $ $\{f_1,f_2,...f_m\}$.

 If each $f_i$ is injective and for any n and for any two distinct words $\omega$, $\omega '$ $\in W_n$=$\{1,...m\}^n$ we have 
$$K_{\omega}\cap K_{\omega '}= F_{\omega}\cap F_{\omega '} $$
where $f_{\omega}$=$f_{\omega_1}\circ\cdot\cdot\cdot\circ f_{\omega_n}$,  
$K_{\omega}$=$f_{\omega}(K)$, $\ F_0$ is the set of fixed points of $\{f_1,f_2,...f_m\}$, and $F_{\omega}=f_{\omega}(F_0)$, is called a $finitely\ ramified$ $self$-$similar\ set\ with\ respect\ to\ \{f_1,f_2,...f_m\}$

For any self-similar set, K, with respect to $\{f_1,f_2,...f_m\}$. There is a natural  sequence of $approximating\ graphs\ V_n$ with vertex set $F_n$ defined as follows. For all $n\geq0$ and for all 
$\omega\in W_n$ define $V_{0}$ as the complete graph with vertices $F_{0}$, 
$$F_n:= \bigcup_{\omega\in W_n}F_{\omega},$$
$$F_{\omega}:= \bigcup_{x \in V_)}F_{\omega}(x),$$

where $F_{\omega}:=f_{a_n}\circ f_{a_{n-1}}\circ\cdots f_{a_1}\circ $ and $\omega=a_1a_2\cdots a_n$. Also, $x,y\in F_n$ are connected by an edge in $V_n$ if $f_i^{-1}(x)$ and $f_i^{-1}(y)$ are connected by an edge in $V_{n-1}$ for some $1\leq i\leq m$.

Let $K$ be a compact metrizable topological space and $S$ be a finite set. Also, let $F_i$ be a continuous injection from $K$ to itself $\forall i\in S$. Then, $(K,S,\{F_i\}_{i\in S})$ is called a $self$-$similar\ structure$ if there exists a continuous surjection $\pi:\Sigma\rightarrow K$ such that $F_i\circ\pi=\pi\circ\sigma_i$ $\forall i\in S$, where $\Sigma=S^{\NN}$ the one-sided infinite sequences of symbols in $S$ and $\sigma_i:\Sigma\rightarrow\Sigma$ is defined by $\sigma_i(\omega_1\omega_2\omega_3...)=i\omega_1\omega_2\omega_3...$ for each $\omega_1\omega_2\omega_3...\in\Sigma$

Clearly, if $K$ is the self-similar set with respect to injective contractions $\{f_1,f_2,...f_m\}$, then $(K,\{1,2,...m\},\{f_i\}_{i=1}^m)$ is a self-similar structure. 

Notice that two non-isomorphic self-similar structures can have the same finitely ramified self-similar set, however the structures will not have the same sequence of approximating graphs $V_n$. Also, any two isomorphic self-similar structures whose compact metrizable topological spaces are finitely ramified self-similar sets will have approximating graphs which are isomorphic $\forall n\geq0$.

A $fully\ symmetric$ finitely ramified self-similar structure with respect to $\{f_1,f_2,...f_m\}$ is a self-similar structure $(K,\{1,2,...m\},\{f_1,f_2,...f_m\})$ such that $K$ is a finitely ramified self-similar set, and, as in \cite{MR2450694}, for any permutation $\sigma:F_0\rightarrow F_0$ there is an isometry $g_{\sigma}:K\rightarrow K$ that maps any $x\in F_0$ into $\sigma(x)$ and preserves the self-similar structure of $K$. This means that there is a map $\tilde{g_{\sigma}}:W_1\rightarrow W_1$ such that $f_i\circ g_{\sigma}=g_{\sigma}\circ f_{\tilde{g_{\sigma}}(i)}$ $\forall i\in W_1$. The group of isometries $g_{\sigma}$ is denoted $\mathfrak{G}$.

As in \cite{HST11}, the definition of a fully symmetric finitely ramified self-similar structure may be combined into one compact definition. 

\begin{definition}
A fractal $K$ is a fully symmetric finitely ramified self-similar set if $K$ is a compact connected metric space with injective contraction maps on a complete metric space $\{f_i\}_{i=1}^m$ such that 
$$K=f_1(K)\cup \cdot \cdot \cdot \cup f_m(K).$$
and the following three conditions hold:
\begin{enumerate}
\item there exist a finite subset $F_0$ of $K$ such that
$$f_j(K)\cap f_k(K)=f_j(F_0)\cap f_k(F_0)  $$
for $j\neq k$ (this intersection may be empty);
\item if $v_0\in F_0\cap f_j(K)$ then $v_0$ is the fixed point of $f_j$;
\item there is a group $\mathcal{G}$ of isometries of $K$ that has a doubly transitive action on $F_0$ and is compatible with the self-similar structure $\{f_i\}_{i=1}^m$, which means that for any $j$ and any $g\in \mathcal{G}$ there exist a $k$ such that
$$g^{-1}\circ f_j\circ g=f_k. $$
\end{enumerate}

\end{definition}

\section{Counting Spanning Trees on Fractal Graphs}~\label{ch:mainresult}

Let $K$ be a fully symmetric finitely ramified self-similar structure, $V_n$ be its sequence of approximating graphs, and $P_n$ denote the probabilistic graph Laplacian of $V_n$. 

The next two Propositions describe the spectral decimation process, which inductively gives the spectrum of $P_n$. 

The $V_0$ network is the complete graph on the boundary set and we set $m=|V_0|$. Write $P_1$ in block form
$$P_1=\begin{pmatrix}A&B\\
C&D\\
\end{pmatrix}$$ 
where A is a square block matrix associated to the boundary points. Since the $V_1$ network never has an edge joining two boundary points A is the $mxm$ identity matrix. 
The Schur Complement of $P_1$ is 
\begin{equation*}
 S(z)=(A-zI)-B(D-z)^{-1}C
\end{equation*}
 
\begin{proposition}\label{prop:existR}(Bajorin, et al.,\cite{MR2450694}) For a given fully symmetric finitely ramified self-similar structure K there are unique scalar valued rational functions $\phi(z)$ and $R(z)$ such that for $z\notin\sigma(D)$
\begin{equation*}
 S(z)=\phi(z)(P_0-R(z))
\end{equation*}
Now $P_0$ has entries $a_{ii}=1$ and $a_{ij}=\frac{-1}{m-1}$ for $i\neq j$. Looking at specific entries of this matrix valued equation we get two scalar valued equations
\begin{equation*}
 \phi(z)=-(m-1)S_{1,2}(z)
\end{equation*}
and
\begin{equation*}
 R(z)= 1-\frac{S_{1,1}}{\phi(z)}.
\end{equation*} 
Where $S_{i,j}$ is the $i,j$ entry of the matrix $S(z)$. 
\end{proposition}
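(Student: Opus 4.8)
The plan is to reduce the proposition to a single structural fact: for each $z\notin\sigma(D)$ the Schur complement $S(z)$ is a linear combination of the $m\times m$ identity $I$ and the $m\times m$ all-ones matrix $J$. Concretely, I want to show that all diagonal entries of $S(z)$ are equal (to the common value $S_{1,1}(z)$) and all off-diagonal entries are equal (to $S_{1,2}(z)$), so that
$$S(z)=S_{1,1}(z)\,I+S_{1,2}(z)\,(J-I).$$
Granting this, the rest is elementary: from the stated form of $P_0$ one has $P_0=I-\tfrac{1}{m-1}(J-I)$, hence for scalars $\phi,R$
$$\phi\,(P_0-R\,I)=\phi(1-R)\,I-\tfrac{\phi}{m-1}\,(J-I),$$
and since $I$ and $J-I$ are linearly independent, matching the $(J-I)$-coefficient forces $\phi(z)=-(m-1)S_{1,2}(z)$, after which matching the $I$-coefficient forces $R(z)=1-S_{1,1}(z)/\phi(z)$; these same two formulas give uniqueness. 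Rationality is immediate since the entries of $(D-zI)^{-1}$, and hence of $S(z)=(A-zI)-B(D-zI)^{-1}C$, are rational in $z$ by Cramer's rule.

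For the structural fact I would exploit the symmetry group $\mathcal{G}$. By condition (3) of the definition of a fully symmetric finitely ramified self-similar set, each $g\in\mathcal{G}$ satisfies $g^{-1}f_jg=f_k$ for a permutation $j\mapsto k$ of $W_1=\{1,\dots,m\}$, so $g$ induces a permutation of the $m$ cells $f_i(K)$ compatible with the gluing data of condition (1); together with condition (2) — every point of $F_0\cap f_j(K)$ is the fixed point of $f_j$ — this promotes $g$ to a graph automorphism of $V_1$ that maps $F_0$ onto $F_0$, realizing the prescribed permutation of the boundary, and maps the interior $F_1\setminus F_0$ onto itself. Writing the corresponding permutation matrix in the block form used for $P_1$ as $\Pi_g=\bigl(\begin{smallmatrix}\pi_g&0\\0&\rho_g\end{smallmatrix}\bigr)$, the automorphism property gives $\Pi_gP_1\Pi_g^{-1}=P_1$ (graph automorphisms preserve the degree and adjacency matrices, hence $P_1$), which blockwise reads $\pi_gA\pi_g^{-1}=A$, $\pi_gB\rho_g^{-1}=B$, $\rho_gC\pi_g^{-1}=C$, $\rho_gD\rho_g^{-1}=D$. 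Substituting these into the formula for $S(z)$ and using $\rho_g(D-zI)\rho_g^{-1}=D-zI$ yields $\pi_gS(z)\pi_g^{-1}=S(z)$ for every $g\in\mathcal{G}$. Since $\mathcal{G}$ acts doubly transitively on $F_0$, the permutations $\{\pi_g\}$ act doubly transitively on $\{1,\dots,m\}$, so transitivity equates all diagonal entries of $S(z)$ and $2$-transitivity equates all off-diagonal entries, which is the claim.

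The one genuine point needing care is that $\phi$ is not the zero function, so that $R=1-S_{1,1}/\phi$ is a genuine rational function; equivalently, that $S_{1,2}(z)$ is not identically zero. I would obtain this from connectivity of $V_1$ via the determinantal Schur identity $\det(P_1-zI)=\det(D-zI)\,\det S(z)$ evaluated at $z=0$: since $0\notin\sigma(D)$ ($D$ is a proper principal block of $P_1$ and $V_1$ is connected) while $0$ is a simple eigenvalue of $P_1$, one gets $\det S(0)=0$, and solving the interior block of $P_1x=0$ for the interior coordinates shows $\dim\ker S(0)=\dim\ker P_1=1$, so $\operatorname{rank}S(0)=m-1\ge 1$. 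A scalar multiple of $I$ has rank $0$ or $m$, so $S(0)$ — hence $S(z)$ — cannot be a scalar multiple of $I$, forcing $S_{1,2}\not\equiv 0$; in fact, applying $S(0)$ to the constant vector (which spans $\ker P_1$) gives $S_{1,1}(0)=-(m-1)S_{1,2}(0)$, i.e. $R(0)=0$, a reassuring check given the role of $R$ as the resistance-scaling function in spectral decimation. The main obstacle I anticipate is not these computations but the first half of the preceding paragraph: making precise that an isometry $g\in\mathcal{G}$ really descends to a well-defined automorphism of the finite combinatorial object $V_1$ respecting the boundary/interior splitting — that the combinatorics of $V_1$ is canonically encoded by the self-similar structure in a way the $\mathcal{G}$-action preserves. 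Once that is pinned down, the equivariance of $S(z)$ and the double-transitivity argument are routine.
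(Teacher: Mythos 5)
The paper does not prove this proposition; it is imported verbatim from Bajorin et al.\ \cite{MR2450694}, so there is no in-paper argument to compare against. Your reconstruction is correct and is essentially the argument of that source: the doubly transitive action of $\mathcal{G}$ on $F_0$ induces block-diagonal permutation symmetries of $P_1$, forcing $S(z)$ to lie in the span of $I$ and $J-I$, after which $\phi$ and $R$ are read off (uniquely) by matching coefficients, with $S_{1,2}\not\equiv 0$ secured by the rank of $S(0)$ exactly as you describe. The only step you rightly flag as needing care --- that each isometry $g\in\mathcal{G}$ descends to an automorphism of $V_1$ preserving the boundary/interior splitting --- follows directly from the compatibility condition $g^{-1}\circ f_j\circ g=f_k$ together with the definition of edges of $V_1$ via the cells $f_i(K)$, so there is no genuine gap.
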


Now, we let 
\begin{equation*}
 E(P_0,P_1):=\sigma(D)\bigcup\{z:\phi(z)=0\}
\end{equation*} 
and call $E(P_0,P_1)$ the exceptional set. 
\\
Let $mult_D(z)$ be the multiplicity of $z$ as an eigenvalue of $D$, $mult_n(z)$ be the multiplicity of $z$ as an eigenvalue of $P_n$, $mult_n(z)=0$ if and only if $z$ is not an eigenvalue of $P_n$, and similarly $mult_D(z)=0$ if and only if $z$ is not and eigenvalue of $D$. Then we may inductively find the spectrum of $P_n$ with the following Proposition. 

\begin{proposition}\label{prop:inductionsteps}(Bajorin, et al.,\cite{MR2450694}) For a given fully symmetric finitely ramified self-similar structure K, and $R(z),\ \phi(z),\ E(P_0,P_1)$ as above, the spectrum of $P_n$ may be calculate inductively using the following criteria:
\begin{enumerate}
 \item if $z\notin E(P_0,P_1)$, then
	\begin{equation*}
 	mult_n(z)=mult_{n-1}(R(z))
	\end{equation*}
\item if $z\notin \sigma(D)$, $\phi(z)=0$ and $R(z)$ has a removable singularity at z then,
	\begin{equation*}
	 mult_n(z)=|V_{n-1}|
	\end{equation*}
\item if $z\in \sigma(D)$, both $\phi(z)$ and $\phi(z)R(z)$ have poles at z, $R(z)$ has a removable singularity at z, and $\frac{\partial}{\partial z}R(z)\neq 0$, then
	\begin{equation*}
	 mult_n(z)=m^{n-1}mult_D(z)-|V_{n-1}|+mult_{n-1}(R(z))
	\end{equation*}
 \item if $z\in \sigma(D)$, but $\phi(z)$ and $\phi(z)R(z)$ do not have poles at z, and $\phi(z)\neq 0$,then 
	\begin{equation*}
	 mult_n(z)=m^{n-1}mult_D(z)+mult_{n-1}(R(z))
	\end{equation*}
\item if $z\in \sigma(D)$, but $\phi(z)$ and $\phi(z)R(z)$ do not have poles at z, and $\phi(z)=0$,then 
	\begin{equation*}
	 mult_n(z)=m^{n-1}mult_D(z)+|V_{n-1}|+mult_{n-1}(R(z))
	\end{equation*}
\item if $z\in \sigma(D)$, both $\phi(z)$ and $\phi(z)R(z)$ have poles at z, $R(z)$ has a removable singularity at z, and $\frac{\partial}{\partial z}R(z)=0$, then
	\begin{equation*}
	  mult_n(z)=m^{n-1}mult_D(z)-|V_{n-1}|+2mult_{n-1}(R(z))
	\end{equation*}
\item if $z\notin \sigma(D)$, $\phi(z)=0$ and $R(z)$ has a pole at z, then $mult_n(z)=0$.
\item if $z\in\sigma(D)$, but $\phi(z)$ and $\phi(z)R(z)$ do not have poles at z, $\phi(z)=0$ and $R(z)$ has a pole at z, then 
	\begin{equation*}
	 mult_n(z)=m^{n-1}mult_D(z).
	\end{equation*}
\end{enumerate}
\end{proposition}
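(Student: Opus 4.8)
The plan is to prove the multiplicity formulas by the spectral decimation method, reducing the eigenvalue problem for $P_n$ to that for $P_{n-1}$ through a Schur complement computation, exactly paralleling the $V_1$-to-$V_0$ reduction recorded in Proposition~\ref{prop:existR}. First I would set up the ``one level finer'' block decomposition: since $V_n=\bigcup_{\eta\in W_{n-1}}f_\eta(V_1)$, the vertex set $F_n$ splits as $F_{n-1}\sqcup(F_n\setminus F_{n-1})$, where the junction vertices $F_{n-1}=\bigcup_{\eta}f_\eta(F_0)$ are shared among the $m^{n-1}$ copies of $V_1$ and the remaining vertices are partitioned into the $m^{n-1}$ disjoint interior sets $f_\eta(F_1\setminus F_0)$. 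Writing $P_n$ in block form with respect to this splitting, the interior--interior block is block diagonal over $\eta$ with every diagonal block equal to the matrix $D$ from the $P_1$ decomposition, because the degree of an unshared interior vertex in $V_n$ equals its degree inside its own copy. Consequently $\sigma(D_n')=\sigma(D)$ and $z$ occurs in $D_n'$ with multiplicity $m^{n-1}\,mult_D(z)$ --- the origin of the $m^{n-1}mult_D(z)$ terms.

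Second, I would establish the central identity that the Schur complement $S_n(z)$ of $P_n$ onto $\CC^{F_{n-1}}$ equals $\phi(z)\bigl(P_{n-1}-R(z)I\bigr)$. This follows from Proposition~\ref{prop:existR} applied copy by copy: each $f_\eta(V_1)$ contributes $\phi(z)(P_0^{(\eta)}-R(z)I)$ to the reduced boundary operator, weighted by the ratios (degree of a junction vertex inside copy $\eta$)/(its total degree in $V_n$); these are precisely the weights that reassemble the local $P_0^{(\eta)}$'s into the global probabilistic Laplacian $P_{n-1}$ on $V_{n-1}=\bigcup_\eta f_\eta(V_0)$, while the weights themselves sum to $1$ and so cancel the spurious diagonal contributions. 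Granting this identity together with the standard fact that $\dim\ker(P_n-zI)=\dim\ker S_n(z)$ whenever $z\notin\sigma(D_n')=\sigma(D)$, the cases with $z\notin\sigma(D)$ are immediate: if moreover $\phi(z)\ne0$ (i.e.\ $z\notin E(P_0,P_1)$) then $\ker S_n(z)=\ker(P_{n-1}-R(z)I)$, so $mult_n(z)=mult_{n-1}(R(z))$ (case 1); if $\phi(z)=0$ with $R$ removable at $z$ then $S_n(z)\equiv 0$, so $\ker S_n(z)=\CC^{F_{n-1}}$ and $mult_n(z)=|V_{n-1}|$ (case 2); and if $\phi(z)=0$ with $R$ having a pole then $S_n(z)=-(\phi R)(z)\,I\ne0$ is invertible, so $mult_n(z)=0$ (case 7).

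Third --- and this is where the real work lies --- one must treat $z\in\sigma(D)$, where the Schur reduction no longer preserves kernel dimension and $z$ is already forced into $\sigma(P_n)$ with multiplicity $m^{n-1}mult_D(z)$ by the interior block. Here I would count $\ker(P_n-zI)$ directly, splitting an eigenfunction $f=(f_B,f_I)$ according to its junction part $f_B$: the $f_B=0$ sector contributes the Dirichlet-type eigenfunctions $f_I\in\ker(D_n'-zI)$ that are additionally annihilated by the boundary block $B_n$, while the $f_B\ne0$ sector is controlled, through the solvability condition $C_nf_B\in\mathrm{range}(zI-D_n')$ and the limiting form of $\phi(z)(P_{n-1}-R(z)I)$, by $mult_{n-1}(R(z))$. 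The analytic hypotheses of cases 3--6 and 8 --- whether $\phi(z)$ and $\phi(z)R(z)$ have poles, whether $R$ is removable, whether $\partial_zR(z)$ vanishes --- are exactly the conditions that pin down the dimensions of these two sectors and the size of the junction over/under-count, producing the $\pm|V_{n-1}|$ corrections and the doubled $2\,mult_{n-1}(R(z))$ in the degenerate case $\partial_zR(z)=0$. I expect the case-by-case bookkeeping of how many interior eigenfunctions of $D_n'$ survive the boundary constraint to be the main obstacle; since the statement is quoted from Bajorin et al.~\cite{MR2450694}, one may instead simply invoke their analysis, reproducing only the Schur-complement identity above in detail.
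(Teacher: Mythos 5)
The paper does not prove this proposition at all: it is imported verbatim from Bajorin et al.\ \cite{MR2450694} (building on the Malozemov--Teplyaev spectral similarity framework), so there is no in-paper argument to compare yours against. Judged on its own terms, your sketch follows the same route as the cited source. Your setup is sound: for a finitely ramified structure the interior vertices of distinct $(n-1)$-cells are pairwise non-adjacent and each interior vertex has all its neighbours inside its own cell, so the interior--interior block of $P_n$ really is a direct sum of $m^{n-1}$ copies of $D$, which is where $m^{n-1}\,mult_D(z)$ comes from; and the identity $S_n(z)=\phi(z)\bigl(P_{n-1}-R(z)I\bigr)$, obtained by assembling the copy-by-copy Schur complements with weights proportional to the share of each junction vertex's degree lying in each cell, is exactly the ``spectral self-similarity'' statement on which \cite{MR2450694} rests (full symmetry is what guarantees a single scalar pair $\phi,R$ works for every copy). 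Your disposal of cases 1, 2 and 7 from this identity plus $\dim\ker(P_n-zI)=\dim\ker S_n(z)$ for $z\notin\sigma(D)$ is correct.

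The genuine gap, if one wanted a self-contained proof, is precisely where you flag it: cases 3--6 and 8. For $z\in\sigma(D)$ the Schur reduction is not kernel-preserving, and the corrections $\pm|V_{n-1}|$ and the factor $2$ in case 6 come from a delicate limiting analysis of $\phi$, $\phi R$ and $\partial_z R$ near $z$ (essentially an eigenprojection/rank count as $w\to z$), not from a soft solvability condition as your third paragraph suggests; asserting that the hypotheses ``are exactly the conditions that pin down the dimensions'' restates the claim rather than proving it. Since the proposition is explicitly attributed to \cite{MR2450694}, deferring those cases to the citation --- as both you and the paper do --- is legitimate, but be aware that the exceptional-value bookkeeping is the entire mathematical content of the hard cases and is not established by your outline.
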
 

After carrying out the inductive calculations using items (1)-(8), define 
\begin{equation*}
 A:=\{\alpha : \alpha \textrm{ satisfies item (2) or (8)}\}
\end{equation*}
for $\alpha\in A$, $\alpha_n:=mult_n(\alpha)$
\begin{equation*}
 B:=\{\beta : \textrm{ for some }n\geq1,\  mult_n(\beta) \neq0 \textrm{ and } mult_{n-1}(R(\beta))\neq0\}
\end{equation*}
and for $\beta \in B$, $\beta_n^k:=mult_n(R_{(-k)}(\beta))$.

Since $V_n$ is connected $mult_n(0)=1$ for all $n\geq0$. Again from \cite{MR2450694}, we get that 

\begin{equation*}
\sigma(P_n)\setminus\{0\}=\bigcup_{\alpha\in A} \left\{\alpha\right\} \bigcup_{\beta\in B}\left[\bigcup_{k=0}^{n}\bigl\{R_{-k}(\beta):\beta_n^k\neq0\bigr\}\right].
\end{equation*}
\\

Hence the non-zero eigenvalues of $P_n$ are the zeros of polynomials or preiterates of rational functions. 
To be able to use Theorem~\ref{thm:matrixtree}, we need to know how to take the product of preiterates of 
rational functions of a particular form. The proof of Theorem~\ref{thm:maintheoremfull} will show that 
$R(z)$ satisfies the assumptions of the next Lemma, and use this information to be able to calculate the
number of spanning trees on the fractal graphs under consideration.

\begin{lemma}\label{prop:rationals}  Let $R(z)$ be a rational function such that $R(0)=0$, $\deg(R(z))=d$, $R(z)=\frac{P(z)}{Q(z)}$, with $\deg(P(z))>\deg(Q(z))$.  Let $P_d$ be the leading coefficient of $P(z)$.  Fix $\alpha\in\CC$.  Let $\{R_{(-n)}(\alpha)\}$ be the set of $n^{\text{th}}$ preiterates of $\alpha$ under $R(z)$.  By convention, $R_{(0)}(\alpha):=\{\alpha\}$.  Then for $n\geq 0$,
$$\prod_{z\in\{R_{(-n)}(\alpha)\}}z=\alpha\left(\frac{-Q(0)}{P_d}\right)^{\left(\frac{d^n-1}{d-1}\right)}.$$
\end{lemma}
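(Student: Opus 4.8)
The plan is to prove the formula by induction on $n$, using the multiplicativity of the product over preiterates together with Vieta's formulas (the product of the roots of a polynomial in terms of its coefficients).

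First I would dispose of the base case $n=0$: by convention $R_{(0)}(\alpha)=\{\alpha\}$, so the left side is $\alpha$ and the right side is $\alpha\left(\frac{-Q(0)}{P_d}\right)^{0}=\alpha$, so they agree. For the inductive step, suppose the formula holds for $n-1$. The key observation is that the set of $n$-th preiterates of $\alpha$ decomposes as $\{R_{(-n)}(\alpha)\} = \bigcup_{w\in\{R_{(-(n-1))}(\alpha)\}} \{z : R(z)=w\}$, so that
\begin{equation*}
\prod_{z\in\{R_{(-n)}(\alpha)\}} z = \prod_{w\in\{R_{(-(n-1))}(\alpha)\}} \;\prod_{z:\,R(z)=w} z.
\end{equation*}
For fixed $w$, the equation $R(z)=w$, i.e. $P(z)=wQ(z)$, becomes $P(z)-wQ(z)=0$, a polynomial of degree $d$ in $z$ (since $\deg P > \deg Q$ forces $\deg P = d$ and the leading term $P_d z^d$ is unaffected by subtracting $wQ(z)$). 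Its leading coefficient is $P_d$ and its constant term is $P(0)-wQ(0)$; since $R(0)=0$ and $R(z)=P(z)/Q(z)$ with $Q(0)\neq 0$ (else $R$ would have a pole at $0$ rather than vanishing there), we have $P(0)=0$, so the constant term is $-wQ(0)$. By Vieta, the product of the $d$ roots is $(-1)^d \cdot \frac{-wQ(0)}{P_d} = (-1)^d(-1)\frac{wQ(0)}{P_d}$; I should double-check the sign bookkeeping here, since the product-of-roots formula is $(-1)^d \cdot (\text{constant term})/(\text{leading coefficient})$, which gives $w \cdot \frac{(-1)^{d+1}Q(0)}{P_d}$ — I will need to confirm whether the intended formula absorbs a $(-1)^d$ factor or whether the paper's conventions (or an implicit sign) make it come out as exactly $w\left(\frac{-Q(0)}{P_d}\right)$; most likely the cleanest route is to carry the $(-1)^d$ honestly and see that it telescopes, or to note the formula is stated up to sign as in Theorem~\ref{thm:matrixtree} where an absolute value appears.

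Granting the per-fiber product equals $w$ times a constant $c$ independent of $w$ (with $c = \frac{-Q(0)}{P_d}$ up to the sign caveat above), the double product becomes
\begin{equation*}
\prod_{w\in\{R_{(-(n-1))}(\alpha)\}} (c\,w) = c^{\,|\{R_{(-(n-1))}(\alpha)\}|} \prod_{w\in\{R_{(-(n-1))}(\alpha)\}} w = c^{\,d^{n-1}} \cdot \alpha\, c^{\left(\frac{d^{n-1}-1}{d-1}\right)},
\end{equation*}
where I used $|\{R_{(-(n-1))}(\alpha)\}| = d^{n-1}$ (each application of $R$ multiplies the number of preimages by $d$, counted with multiplicity) and the inductive hypothesis for the last product. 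The exponent of $c$ is then $d^{n-1} + \frac{d^{n-1}-1}{d-1} = \frac{d^n - d^{n-1} + d^{n-1} - 1}{d-1} = \frac{d^n-1}{d-1}$, which is exactly what the statement claims, completing the induction.

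The main obstacle I anticipate is purely the sign bookkeeping: making sure the $(-1)^d$ from Vieta's product-of-roots formula is accounted for consistently with the stated right-hand side $\left(\frac{-Q(0)}{P_d}\right)^{(d^n-1)/(d-1)}$. One must either verify that the paper intends this identity up to sign (consistent with the absolute value in Theorem~\ref{thm:matrixtree}), or track an extra factor $(-1)^{d\cdot(\text{something})}$ and check it telescopes. A secondary point requiring care is the multiplicity count $|\{R_{(-n)}(\alpha)\}| = d^n$: this is the count with multiplicity, and one should note that multiple roots contribute their value to the product the appropriate number of times, so the fiber-product computation via Vieta (which automatically counts with multiplicity) is the right tool and no separate argument is needed. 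Everything else — the decomposition of preiterate sets, the degree computation for $P(z)-wQ(z)$, the observation $P(0)=0$ from $R(0)=0$, and the geometric-series exponent arithmetic — is routine.
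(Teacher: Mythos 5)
Your proof is essentially identical to the paper's: the same induction on $n$, the same decomposition of the $n$-th preiterate set into fibers over the $(n-1)$-st preiterates, the same application of Vieta's formula to $P(z)-wQ(z)=P_dz^d+\cdots-wQ(0)$, and the same geometric-series bookkeeping for the exponent. The sign caveat you flag is genuine but is present in the paper's own proof as well --- the paper states Vieta's product-of-roots formula without the $(-1)^d$ factor, so the lemma as written is only correct up to sign when $d$ is odd; this is harmless in the application because Theorem~\ref{thm:matrixtree} and equation~(\ref{eqn:maintheoremfull}) wrap everything in absolute values.
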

\begin{proof}[Proof of Lemma~\ref{prop:rationals}] 
For $n=0$, the result is clear.  For $n=1$, we note
\begin{align*}\{R_{(-1)}(\alpha)\}&=\{z:R(z)=\alpha\}\\
&=\{z:P(z)-\alpha Q(z)=0\}\\
&=\{z:P_dz^d+\cdots-Q(0)\alpha=0\},
\end{align*}
where $Q(0)$ is the constant term of $Q(z)$.  As the product of the roots of a polynomial is equal to the constant term over the coefficient of the highest degree term, we have that
$$\prod_{z\in\{R_{(-1)}(\alpha)\}}z=\frac{-\alpha Q(0)}{P_d}.$$
Assume our equation holds for $n$.  Then for $n+1$ we have
$$\bigl\{w:w\in R_{(-(n+1))}(\alpha)\bigr\}=\bigl\{R_{(-1)}(w):w\in R_{(-n)}(\alpha)\bigr\}.$$
Hence,
\begin{align*}
\prod_{w\in\left\{R_{(-(n+1))}(\alpha)\right\}}w&=\prod_{w\in\left\{R_{(-n)}(\alpha)\right\}}\left(\prod_{z\in\left\{R_{(-1)}(w)\right\}} z\right)
=\prod_{w\in\left\{R_{(-n)}(\alpha)\right\}}\left(\frac{-wQ(0)}{P_d}\right),
\end{align*}
with the second equality following from the $n=1$ case.\\
\\
Since $\left|R_{(-n)}(\alpha)\right|=d^n$ (not necessarily distinct) this equality becomes
\begin{align*}
\prod_{w\in\left\{R_{(-(n+1))}(\alpha)\right\}}w&=\left(\frac{-Q(0)}{P_d}\right)^{d^n}\prod_{w\in\left\{R_{(-n)}(\alpha)\right\}}w\\
&=\left(\frac{-Q(0)}{P_d}\right)^{d^n}\cdot (\alpha)\left(\frac{-Q(0)}{P_d}\right)^{\left(\frac{d^n-1}{d-1}\right)}\\
&=\alpha\left(\frac{-Q(0)}{P_d}\right)^{\left(\frac{d^{n+1}-1}{d-1}\right)},
\end{align*}
as desired.
\end{proof}

The following Theorem is the main result of this paper.

\begin{theorem}\label{thm:maintheoremfull}  For a given fully symmetric self-similar structure on a finitely ramified fractal K, let $V_n$ denote its sequence of approximating graphs and let $P_n$ denote the probabilistic graph Laplacian of $V_n$. Arising naturally from the spectral decimation process, there is a rational function $R(z)$, which satisfies the conditions of Lemma~\ref{prop:rationals}, finite sets $A,B\subset\RR$ such that for all $\alpha\in A$, $\beta\in B$, and integers $n,k\geq 0$, there exist functions $\alpha_n$ and $\beta_n^k$ such that the number of spanning trees of $V_n$ is given by\\

\begin{equation}\label{eqn:maintheoremfull}
\begin{split}
\tau(V_n)
%
%
=\left|\frac{\left(\prodd{j}{|V_n|}\right)}{\left(\sumd{j}{|V_n|}\right)}\left(\prod_{\alpha\in A}\alpha^{\alpha_n}\right)\left[\prod_{\beta\in B}\left(\beta^{\sumw{k}{n}{\beta}}\left(\frac{-Q(0)}{P_d}\right)^{\sum_{k=0}^n\beta_n^k\left(\frac{d^k-1}{d-1}\right)}\right)\right]\right|\\
\end{split}
\end{equation}
where $d$ is the degree of $R(z)$, $P_d$ is the leading coefficient of the numerator of $R(z)$, $|V_n|$ is the number of vertices of $V_n$ and $d_j$ is the degree of vertex $j$ in $V_n$.
\end{theorem}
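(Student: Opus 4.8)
The plan is to combine the probabilistic Matrix–Tree Theorem (Theorem~\ref{thm:matrixtree}) with the explicit description of $\sigma(P_n)\setminus\{0\}$ coming from the spectral decimation recursion (Propositions~\ref{prop:existR} and~\ref{prop:inductionsteps}) and the structure of the set $\sigma(P_n)\setminus\{0\}$ displayed just before Lemma~\ref{prop:rationals}. By Theorem~\ref{thm:matrixtree}, $\tau(V_n)$ equals the absolute value of the normalized product $\frac{\prod_j d_j}{\sum_j d_j}\prod_j \lambda_j^{P_n}$ over the nonzero eigenvalues $\lambda_j^{P_n}$. So the whole proof reduces to computing $\prod_{\lambda\in\sigma(P_n)\setminus\{0\}}\lambda$, counted with multiplicity, and recognizing that this product factors exactly into the three blocks appearing on the right side of~\eqref{eqn:maintheoremfull}: the degree prefactor (which is simply Theorem~\ref{thm:matrixtree}), the ``$\alpha$-part'' $\prod_{\alpha\in A}\alpha^{\alpha_n}$ coming from eigenvalues that arise through items (2) or (8) of Proposition~\ref{prop:inductionsteps}, and the ``$\beta$-part'' coming from the preiterate towers $R_{(-k)}(\beta)$.

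First I would verify the hypotheses of Lemma~\ref{prop:rationals} for the spectral decimation function $R(z)$. From Proposition~\ref{prop:existR}, $R(z)=1-S_{1,1}(z)/\phi(z)$ with $\phi(z)=-(m-1)S_{1,2}(z)$, and $S(z)$ is built from the Schur complement $(A-zI)-B(D-z)^{-1}C$ where $A=I$. One checks that $R(0)=0$ because $P_1$ is a probabilistic Laplacian (so $S(0)=\phi(0)P_0$, forcing $R(0)=0$), and that after clearing denominators $R(z)=P(z)/Q(z)$ with $\deg P>\deg Q$ and $\deg R=d$; the value $d$ and the leading coefficient $P_d$ are then well-defined. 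This is mostly bookkeeping about rational functions coming from Schur complements, but it must be done carefully, and I expect this verification — pinning down that $\deg P > \deg Q$ and that the singularities behave as Lemma~\ref{prop:rationals} requires — to be one of the more delicate points, since it depends on the precise combinatorics of $D$, $B$, $C$.

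Next I would compute the eigenvalue product block by block. For the $\alpha$-block: each $\alpha\in A$ (an eigenvalue produced by item (2) or (8)) contributes to $\sigma(P_n)$ with multiplicity $\alpha_n:=mult_n(\alpha)$, contributing the factor $\alpha^{\alpha_n}$. For the $\beta$-block: for each $\beta\in B$ and each $0\le k\le n$, the level-$n$ spectrum contains the set of $k$-th preiterates $R_{(-k)}(\beta)$, each point repeated $\beta_n^k:=mult_n(R_{(-k)}(\beta))$ times as stated before Lemma~\ref{prop:rationals}; applying Lemma~\ref{prop:rationals} with $\alpha\leftarrow\beta$, $n\leftarrow k$ gives
\[
\prod_{z\in R_{(-k)}(\beta)} z=\beta\left(\frac{-Q(0)}{P_d}\right)^{\left(\frac{d^k-1}{d-1}\right)},
\]
and raising this to the power $\beta_n^k$ and multiplying over $k=0,\dots,n$ and over $\beta\in B$ produces exactly $\prod_{\beta\in B}\bigl(\beta^{\sum_{k=0}^n\beta_n^k}\,(\tfrac{-Q(0)}{P_d})^{\sum_{k=0}^n\beta_n^k(\frac{d^k-1}{d-1})}\bigr)$, which matches the notation $\beta^{\sumw{k}{n}{\beta}}$ in~\eqref{eqn:maintheoremfull} once one unpacks the macro. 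Since $\sigma(P_n)\setminus\{0\}$ is the disjoint union of the $\alpha$-part and the $\beta$-parts, multiplying all contributions and inserting the prefactor from Theorem~\ref{thm:matrixtree} yields~\eqref{eqn:maintheoremfull} directly.

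The remaining subtlety is to make sure the decomposition of $\sigma(P_n)\setminus\{0\}$ used here is genuinely a partition that accounts for every nonzero eigenvalue with the correct total multiplicity — i.e., that the sets $A$ and the towers $\{R_{(-k)}(\beta)\}$ do not overlap in a way that double-counts, and that no nonzero eigenvalue is omitted. This is exactly the content of the displayed formula for $\sigma(P_n)\setminus\{0\}$ quoted from~\cite{MR2450694}, so I would invoke that, and then only need to track multiplicities consistently through items (1)–(8); the one place to be careful is eigenvalue $0$, which by connectedness has $mult_n(0)=1$ and is excluded throughout, so it never enters the product. I expect the genuine mathematical work to be concentrated in (a) the verification that $R(z)$ meets the hypotheses of Lemma~\ref{prop:rationals}, and (b) confirming that the multiplicity functions $\alpha_n$ and $\beta_n^k$ defined via Proposition~\ref{prop:inductionsteps} are precisely the exponents appearing in~\eqref{eqn:maintheoremfull}; everything else is assembling pieces already in place.
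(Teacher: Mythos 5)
Your proposal is correct and follows essentially the same route as the paper: apply Theorem~\ref{thm:matrixtree}, use Propositions~\ref{prop:existR} and~\ref{prop:inductionsteps} to split $\sigma(P_n)\setminus\{0\}$ into the $\alpha$-part and the $\beta$-preiterate towers, and evaluate the tower products with Lemma~\ref{prop:rationals}. The only difference is that the paper settles the point you flag as delicate --- that $R(0)=0$ and that $\deg P(z)>\deg Q(z)$ --- by citing Lemma 4.9 of \cite{MR1997913} and Corollary 1 of \cite{HST11} rather than verifying it directly from the Schur complement.
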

\begin{proof}[Proof of Theorem~\ref{thm:maintheoremfull}]  
From Kirchhoff's matrix-tree theorem for probabilistic graph Laplacians (Theorem ~\ref{thm:matrixtree}), we know that
\begin{equation*}
\tau(V_n)=\frac{\prodd{j}{|V_n|}}{\sumd{j}{|V_n|}} \prod_{j=1}^{|V_n|-1}\lambda_j
\end{equation*}
where $\lambda_j$ are the non-zero eigenvalues of $P_n$.

Existence and uniqueness of the rational function $R(z)$ is given Proposition (\ref{prop:existR}). After carrying out the inductive calculations using Proposition (\ref{prop:inductionsteps}) items (1)-(8), we get the sets $A$ and $B$, and the functions $\alpha_n$ and $\beta_n^k$. 
\\
\\
To see that the sets $A$ and $B$ are finite. Recall that the functions $R(z)$ and $\phi(z)$ from Proposition (\ref{prop:inductionsteps}) are rational, thus $R(z)$, $\phi(z)$, and $R(z)\phi(z)$ have finitely many zeroes, poles, and removable singularities. Also, since the matrix $D$, from writing $P_1$ in block form to define the Schur Complement, is finite, $\sigma(D)$ is finite. Following items (1)-(8) of Proposition (\ref{prop:inductionsteps}) these observations imply that $A$ and $B$ are finite sets.
\\
\\
From Proposition (\ref{prop:inductionsteps}) we know that 
\begin{equation*}
\bigl\{\lambda_j\bigr\}_{j=1}^{|V_n|-1}=\bigcup_{\alpha\in A} \left\{\alpha\right\} \bigcup_{\beta\in B}\left[\bigcup_{k=0}^{n}\bigl\{R_{-k}(\beta):\beta_n^k\neq0\bigr\}\right]
\end{equation*}
where the multiplicities of $\alpha\in A$ are given by $\alpha_n$ and the multiplicities of $\{R_{-k}(\beta)\}$ are given by $\beta_n^k$. Letting $\lambda_{|V_n|}=0$. 
\\
\\
From items (1)-(8) of Proposition (\ref{prop:inductionsteps}) it follows that $\forall z\in \{R_{-k}(\beta)\}$ the multiplicity of z depends only on $n$ and $k$, thus
\\
\begin{equation*}
\prod_{j=1}^{|V_n|-1}\lambda_j=\left(\prod_{\alpha\in A}\alpha^{\alpha_n}\right)\left[\prod_{\beta\in B}\left(\prod_{k=0}^{n}\left(
\prod_{z\in\{R_{-k}(\beta)\}}
z^{\beta_n^k}\right)\right)\right]
\end{equation*}

From Lemma 4.9 in \cite{MR1997913}, $R(0)=0$. From Corollary 1 in \cite{HST11}, it follows that, if we write $R(z)=\frac{P(z)}{Q(z)}$ where $P(z)$ and $Q(z)$ are relatively prime polynomials, then $deg(P(z))>deg(Q(z))$. Thus, the conditions of Lemma~\ref{prop:rationals} are satisfied, and applying this theorem gives
\\
\begin{equation*}
=\left(\prod_{\alpha\in A}\alpha^{\alpha_n}\right)\left[\prod_{\beta\in B}\left(\prod_{k=0}^{n}\left(\beta\left(\frac{-Q(0)}{P_d}\right)^{\frac{d^k-1}{d-1}}\right)^{\beta_n^k}\right)\right]
\end{equation*}
\\
\begin{equation*}
=\left(\prod_{\alpha\in A}\alpha^{\alpha_n}\right)\left[\prod_{\beta\in B}\left(\beta^{\sumw{k}{n}{\beta}}\left(\frac{-Q(0)}{P_d}\right)^{\sum_{k=0}^n\beta_n^k\left(\frac{d^k-1}{d-1}\right)}\right)\right]
\end{equation*}
Applying Kirchhoff's matrix-tree theorem for probabilistic graph Laplacians (Theorem~\ref{thm:matrixtree}), we verify the result. 

\end{proof}

Section \ref{section:ex} of this work will begin with a well known example, the $\sierp$ Gasket, and show how to use this theorem
to calculate the number of spanning trees on the fractal graphs under consideration.  This theorem will then
be used to compute the number of spanning trees for three previously unknown examples. 

In \cite{CCY07}, the authors derived multidimensional polynomial recursion equations to solve explicity for
the number of spanning trees on $SG_d(n)$ with $d$ equal to two, three and four, and on $SG_{d,b}(n)$ with
$d$ equal to two and $b$ equal to two and three. They note in that work that it is intriguing that their 
recursion relations become more and more complicated as $b$ and $d$ increase, but the solutions remain simple, 
and comment that with their methods, they do not have a good explanation for this. The following
corollary explains why the solutions remain simple. 

\begin{corollary}\label{cor:simple}
For a given fully symmetric self-similar structure on a finitely ramified fractal K, with approximating graphs
$V_n$, there exist a finite set of primes $\{p_k\}_{k=1}^r$ and functions $\{f_k:\mathbb{N}_0\rightarrow \mathbb{N}_0\}_{k=1}^r$
such that
$$\tau(V_n)=\prod_{k=1}^{r}p_k^{f_k(n)}.$$
\end{corollary}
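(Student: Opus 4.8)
The plan is to read off the structure of the formula in Theorem~\ref{thm:maintheoremfull} and argue that every ingredient in $\tau(V_n)$ is a rational number whose prime factorization has exponents that depend only on $n$ (through the multiplicity functions $\alpha_n$ and $\beta_n^k$). First I would observe that $\tau(V_n)$, being a count of spanning trees, is a nonnegative integer, so whatever expression we obtain must clear all denominators and have nonnegative prime exponents; this means we do not need to track signs or worry about negative powers surviving — the absolute value in \eqref{eqn:maintheoremfull} plus integrality does the cleanup for us. Then I would go factor by factor through the right-hand side of \eqref{eqn:maintheoremfull}: the quantity $\left(\prodd{j}{|V_n|}\right)\big/\left(\sumd{j}{|V_n|}\right)$ is a ratio of integers built from vertex degrees, so it contributes finitely many primes with exponents that are functions of $n$; each $\alpha\in A$ is a fixed element of $\RR$ — in fact, as the examples confirm, a fixed rational number — so $\alpha^{\alpha_n}$ contributes the primes dividing the numerator and denominator of $\alpha$, each with exponent a fixed integer multiple of $\alpha_n$; similarly each $\beta\in B$ is a fixed rational, and the base $\frac{-Q(0)}{P_d}$ is a fixed rational determined by $R(z)$, so the $\beta$-block contributes primes from $\beta$ and from $Q(0),P_d$, with exponents that are integer linear combinations (with coefficients depending on $k$, hence ultimately on $n$) of the $\beta_n^k$.

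The key steps in order: (1) invoke Theorem~\ref{thm:maintheoremfull} to write $\tau(V_n)$ as the displayed product; (2) note $A$ and $B$ are finite and that $\alpha$, $\beta$, $Q(0)$, $P_d$, and all vertex degrees $d_j$ take only finitely many rational values as $n$ varies — the degrees because in a self-similar graph sequence every vertex has one of finitely many local configurations, so the multiset of degrees is determined by $n$ and the finitely many possible degree values; (3) collect the (finitely many) primes appearing in the numerators and denominators of all these finitely many rationals into a single list $\{p_1,\dots,p_r\}$; (4) for each $p_k$, define $f_k(n)$ to be the $p_k$-adic valuation of the whole expression, which by additivity of valuations is a $\ZZ$-linear combination of the building-block valuations with coefficients among $\alpha_n$, $\sum_k \beta_n^k$, $\sum_k \beta_n^k\frac{d^k-1}{d-1}$, and the degree-count functions — all of which are functions $\NN_0\to\ZZ$ of $n$; (5) conclude that $f_k(n)\in\NN_0$ because $\tau(V_n)$ is a positive integer, so no prime can occur to a negative power, giving $\tau(V_n)=\prod_{k=1}^r p_k^{f_k(n)}$ with $f_k:\NN_0\to\NN_0$.

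The main obstacle is step (2): one must be sure that the degrees $d_j$ of vertices in $V_n$, and hence the rational $\left(\prodd{j}{|V_n|}\right)\big/\left(\sumd{j}{|V_n|}\right)$, only ever involve a bounded set of primes as $n\to\infty$. This follows from the self-similar construction — vertices of $V_n$ are either boundary-type points (fixed points of words, whose degree is determined by how many cells meet there, a bounded quantity) or interior junction points identified between neighboring cells, again with boundedly many local pictures — so the distinct degree values form a fixed finite set, and $\sumd{j}{|V_n|}$, being twice the edge count $m^n|E(V_0)|$, contributes only the primes dividing $m$ and $|E(V_0)|$ together with the primes in the fixed degree set. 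Once this boundedness is pinned down, the remaining work is purely formal bookkeeping with $p$-adic valuations, and the nonnegativity of the exponents is immediate from $\tau(V_n)\in\ZZ_{>0}$. I would also remark that this is exactly the phenomenon observed in \cite{CCY07}: the recursions look complicated, but the answer is forced to be a product of fixed prime powers because the eigenvalues are preiterates of one rational function with fixed rational "scaling" data.
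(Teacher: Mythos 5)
Your strategy is the paper's own: the printed proof is essentially a two-line version of your argument (``$\tau(V_n)$ is a nonnegative integer given by equation~(\ref{eqn:maintheoremfull}), the sets $A$ and $B$ are fixed, and self-similarity gives that for $n\geq 2$ the prime factors of $\prod_j d_j$ are among those of the level-$1$ product''), followed by exactly your observation that integrality forces the exponents into $\NN_0$. So the decomposition, the role of Theorem~\ref{thm:maintheoremfull}, and the use of positivity all match.

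However, two of the supporting claims in your step (2) are false as stated, and both are contradicted by examples in this very paper. First, you assert that each $\beta\in B$ is a fixed \emph{rational} number. For the Hexagasket, $B$ contains $\frac{3+\sqrt{2}}{4}$ and $\frac{3-\sqrt{2}}{4}$, which are irrational, so ``the primes dividing the numerator and denominator of $\beta$'' is not defined and your $p$-adic valuation bookkeeping does not directly apply. The repair is that such $\beta$ occur in Galois-conjugate pairs with identical multiplicity functions $\beta_n^k$ (the characteristic polynomials involved have rational coefficients), so their contributions pair up into a power of the rational norm $\frac{(3+\sqrt{2})(3-\sqrt{2})}{16}=\frac{7}{16}$ --- which is precisely where the prime $7$ in Theorem~\ref{thm:hexa} comes from; only after this grouping can you take valuations. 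Second, you claim the distinct vertex degrees of $V_n$ ``form a fixed finite set.'' For the Diamond fractal the degrees are $2,4,\dots,2^n$, and for the non-p.c.f.\ gasket they include $2^{n+1}$, so the set of degree values is unbounded as $n\to\infty$. What is actually needed --- and what the paper asserts via self-similarity --- is the weaker statement that the set of \emph{primes} dividing $\prod_j d_j$ is bounded independently of $n$; this does not follow from ``finitely many local configurations'' alone, since a degree in $V_n$ is a sum of boundedly many degrees coming from the cells containing the vertex, and sums can a priori introduce new primes, so some argument (e.g.\ the symmetry forcing equal summands, as happens in all the examples) is still required. With these two points corrected, the remainder of your argument coincides with the paper's.
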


\begin{proof}[Proof of Corollary \ref{cor:simple}] 
Since $\tau(V_n)$ is a nonnegative integer, and is given by equation (\ref{eqn:maintheoremfull}), the sets
$A$ and $B$ are fixed,  and self-similarity gives that for any $n\geq2$ the only prime factors of 
$\left(\prod_{i=1}^{|V_n|}d_i\right)$ are the prime factors of $\left(\prod_{i=1}^{|V_1|}d_i\right)$.
\end{proof}


\section{Asymptotic Complexity}

Let $T_n$ for $n\geq0$ be a sequence of finite graphs, $|T_n|$ the number of vertices in $T_n$, and $\tau(T_n)$ denote the number of spanning trees of $T_n$. $\tau(T_n)$ is called the $complexity$ of  $T_n$. The $asymptotic\ complexity$ of the sequence $T_n$ is defined as
$$\lim_{n\rightarrow\infty} \frac{log(\tau(T_n))}{|T_n|}.$$
When this limit exist, it is called the $asymptotic\ complexity\ constant$, or the $tree\ entropy$ of $T_n$, or the $thermodynamic\ limit$ of $T_n$.

For any two, finite, connected graphs $G_1$, $G_2$, let $G_1\vee_{x_1,x_2} G_2$ denote the graph formed by identifying the vertex $x_1\in G_1$ with vertex $x_2\in G_2$. Then $\forall x_1\in G_1, x_2\in G_2$, it is clear that
\begin{equation}{\label{eqn:wedge}}
 \tau(G_1\vee_{x_1,x_2} G_2)=\tau(G_1)\cdot \tau(G_2).
\end{equation}


Dropping  the assumption of full symmetry, we lose the spectral decimation process, but still have the following. 

\begin{theorem}\label{thm:asycomplexity} For a given self-similar structure on a finitely ramified fractal K, let $V_n$ denote its sequence of approximating graphs. Let $m$ denote the number of 0-cells of the $V_1$ graph.
\begin{enumerate}
 \item If $V_1$ is a tree, then $\tau(V_n)=1$ $\forall n\geq0$
 \item If $V_1$ is not a tree, then $log(\tau(V_n))\in \theta(|V_n|)=\theta(m^n)$
\end{enumerate}
\end{theorem}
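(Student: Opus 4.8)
The plan is to treat the two alternatives separately: (1) is a purely combinatorial statement about when $V_n$ is a tree, while (2) splits into an easy upper bound and a less obvious lower bound, both read off from the self‑similar decomposition $V_n=\bigcup_{\omega\in W_{n-1}}f_\omega(V_1)$ together with two structural facts that come from conditions (1) and (2): distinct cells $f_\omega(V_1)$ meet only in (some of) their boundary vertices $f_\omega(F_0)$, and $V_1$ has no edge with both endpoints in $F_0$.

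For (1): since $V_1\supseteq f_1(V_0)\cong V_0=K_{|F_0|}$ and a subgraph of a tree is a forest, $K_{|F_0|}$ must be a forest, so $|F_0|\le 2$; the cases $|F_0|\le 1$ make $K$ a point and are trivial, so assume $|F_0|=2$ and $V_0$ is a single edge $ab$. Then $V_n$ is exactly the graph obtained from $V_{n-1}$ by replacing each edge $f_\omega(a)f_\omega(b)$ ($\omega\in W_{n-1}$) by a fresh copy of the tree $V_1$ joining its endpoints, copies for adjacent edges being glued at the common endpoint. I would then induct on $n$: if $V_{n-1}$ is a tree and $V_n$ contained a cycle $Z$, then inside each inserted copy of $V_1$ the trace $Z\cap f_\omega(V_1)$ must be the unique $f_\omega(a)$–$f_\omega(b)$ path of that tree (a cycle can enter a cell only through its two boundary vertices, and a cell has no cycle of its own), so contracting every copy of $V_1$ back to an edge would turn $Z$ into a cycle of $V_{n-1}$, which is impossible. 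Hence $V_n$ is a tree and $\tau(V_n)=1$.

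For (2): first record $|V_n|=|F_n|=|F_{n-1}|+m^{n-1}(|F_1|-|F_0|)$ (each cell contributes $|F_1|-|F_0|\ge 1$ new interior vertices, in the non‑degenerate case $|F_1|>|F_0|$) and $|E(V_n)|=m^{n-1}|E(V_1)|$ (cells share no edge), so $|V_n|=\Theta(m^n)$ and $|E(V_n)|=\Theta(m^n)$. The upper bound is then immediate: $\tau(V_n)\le\binom{|E(V_n)|}{|V_n|-1}\le 2^{|E(V_n)|}$, hence $\log\tau(V_n)=O(m^n)=O(|V_n|)$. For the lower bound I would prove the elementary inequality $\tau(G)\ge\tau(H)\,\tau(G/H)$ for any connected subgraph $H$ of a connected graph $G$, where $G/H$ is $G$ with $V(H)$ identified to a single vertex; this follows from the injection $(T_H,\bar T)\mapsto T_H\cup\bar T$ from $(\text{spanning tree of }H)\times(\text{spanning tree of }G/H)$ into the spanning trees of $G$ (reading edges of $G/H$ as edges of $G$): the union is spanning, has $|V(G)|-1$ edges, is connected, hence is a spanning tree, and $(T_H,\bar T)$ is recovered as $(T\cap E(H),\,T\setminus E(H))$. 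Applying this to peel off the $m^{n-1}$ cells $f_{\omega_1}(V_1),f_{\omega_2}(V_1),\dots$ one at a time gives $\tau(V_n)\ge\prod_\omega\tau(C_\omega')$, where $C_\omega'$ is $f_\omega(V_1)$ with some of its boundary vertices identified. Because cells meet only in boundary vertices and $V_1$ has no edge joining two boundary vertices, no edge of $C_\omega$ ever becomes a loop under these identifications, so the cycle rank of $C_\omega'$ is at least that of $V_1$, which is $\ge1$ since $V_1$ is not a tree; thus $\tau(C_\omega')\ge 2$ and $\tau(V_n)\ge 2^{m^{n-1}}$, i.e. $\log\tau(V_n)=\Omega(m^n)=\Omega(|V_n|)$. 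Combined with the upper bound this is (2).

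The step I expect to need real care is the lower bound in (2): one must check that the iterated contraction $V_n\to V_n/C_{\omega_1}\to V_n/(C_{\omega_1}\cup C_{\omega_2})\to\cdots$ never collapses an edge of a not‑yet‑peeled cell into a loop (and never disconnects the graph), which is precisely where the finitely ramified hypotheses are used — conditions (1) and (2) force cells to intersect only at boundary points, which are themselves fixed points, so $V_1$ carries no boundary edge. The inequality $\tau(G)\ge\tau(H)\tau(G/H)$ and the tree surgery of (1) are routine once this is in place. A minor point worth isolating is the non‑degeneracy hypothesis $|F_1|>|F_0|$ (equivalently, $K$ infinite), without which $|V_n|$ need not grow and the stated $\Theta$ would fail.
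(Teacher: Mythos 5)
Your proposal is correct, and both of your bounds in part (2) take a genuinely different route from the paper. For the lower bound the paper decomposes $V_n$ into $m^n$ copies of the \emph{complete graph} $V_0$, replaces the actual gluing by a wedge at single vertices, and invokes Cayley's formula $\tau(V_0)=|V_0|^{|V_0|-2}$; you instead peel off the $m^{n-1}$ copies of $V_1$ one at a time via the contraction inequality $\tau(G)\ge\tau(H)\,\tau(G/H)$ and use only that each cell keeps an independent cycle (no edge becomes a loop because every edge of a cell has an interior endpoint). Your version buys two things: it makes rigorous the step the paper labels ``clearly'' (that further identification of vertices does not decrease $\tau$), and it survives the case $|V_0|=2$ --- the paper asserts $|V_0|>2$ whenever $V_1$ is not a tree, but its own Diamond fractal example has $|V_0|=2$ and $\tau(V_0)=1$, so the paper's bound degenerates there while yours still gives $\tau(V_n)\ge 2^{m^{n-1}}$. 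For the upper bound the paper deletes edges from $K_{|V_n|}$ to get $\tau(V_n)\le|V_n|^{|V_n|-2}$, which only yields $\log\tau(V_n)=O(nm^n)$ and must then be repaired by a separate boundedness argument; your count $\tau(V_n)\le\binom{|E(V_n)|}{|V_n|-1}\le 2^{|E(V_n)|}$ together with $|E(V_n)|\le m^{n-1}|E(V_1)|$ gives $O(m^n)$ in one line. Your part (1) is the same statement the paper disposes of with ``$K$ is a fractal string,'' but you actually prove it (forcing $|F_0|\le 2$ and inducting via edge replacement). Your closing caveat about the degenerate case $|F_1|=|F_0|$ is a fair observation about a hypothesis the paper leaves implicit, not a gap in your argument.
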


\begin{proof}[Proof of Theorem~\ref{thm:asycomplexity}]
If $V_1$ is a tree, then K is a fractal string. Hence $\forall n \geq0$ $V_n$ is a tree.
If $V_1$ is not a tree, it is $m$ copies of the $V_0$ graph with vertices identified appropriately. Similarly the $V_n$ graph is $m^n$ copies of the $V_0$ graph with vertices identified appropriately. Let $V_0\vee_{x,x}^{m^n}V_0$ denote $m^n$ copies of $V_0$ each identified to each other at some vertex $x\in V_0$, then clearly for $n\geq0$
\begin{equation}
 \tau(V_n)\geq \tau(V_0\vee_{x,x}^{m^n}V_0).
\end{equation}

Since $V_1$ is not a tree, $|V_0|>2$, also the $V_0$ graph is the complete graph on $|V_0|$ vertices, so by Cayley's formula, \cite{MR1813436}, $\tau(V_0)=|V_0|^{(|V_0|-2)}$.

Combining this with equation (\ref{eqn:wedge}) we get that $$\tau(V_0\vee_{x,x}^{m^n}V_0)=|V_0|^{(|V_0|-2)\cdot m^n}$$ and 
$$ \tau(V_n)\geq |V_0|^{(|V_0|-2)\cdot m^n}.$$
So for $n\geq0$, 
\begin{equation}\label{eqn:lowerasy}
 log(\tau(V_n))\geq m^n\cdot (|V_0|-2)log(|V_0|)\sim |V_n|
\end{equation}
Since $m^n\sim|V_n|$

Now, $V_n$ can also be constructed by deletion of edges from the graph $K_{|V_n|}$. The deletion-contraction principle, \cite{MR1813436}, says that for any connected graph $G$ and any edge $e$ in that graph 
$$\tau(G)=\tau(G\backslash e) + \tau(G-e),$$ 
where $G\backslash e$ is the graph formed by contracting $e$ in $G$ and $G-e$ is the graph formed by deleting $e$ from $G$. 

This tells us that deleting edges from graphs decreases the number of spanning trees, thus 
$$\tau(V_n)\leq \tau(K_{|V_n|})=|V_n|^{(|V_n|-2)}.$$
Since $|V_n|\sim m^n$, $$\tau(V_n)\lesssim m^{n(m^n-2)},$$ which implies $\forall \epsilon>0$ 
\begin{equation}\label{eqn:upperepsasy}
 \lim_{x \to \infty}\frac{log(\tau(V_n))}{m^{n(1+\epsilon)}}=0.
\end{equation}
Now, suppose that the sequence $\frac{log(\tau(V_n))}{m^n}$ is unbounded then $\forall M>0$ $\exists n_0$ s.t. $\forall n\geq n_0$ $\frac{log(\tau(V_n))}{m^n} >M$, but then $\forall \epsilon >0$ and $\forall n> \frac{n_0}{(1+\epsilon)}$, $\frac{log(\tau(V_n))}{m^{n(1+\epsilon)}}>M$ which contradicts equation (\ref{eqn:upperepsasy}). 
Thus, $\frac{log(\tau(V_n))}{m^n}$ is bounded and combining this with equation (\ref{eqn:lowerasy}) implies $\log(\tau(V_n))\in \theta(|V_n|)$, as desired. 

\end{proof}

\begin{corollary}\label{cor:asycomplexity} For a given self-similar structure on a finitely ramified fractal K, let $V_n$ denote its sequence of approximating graphs. The following limits exist.

\begin{equation}
 \limsup_{n\rightarrow\infty} \frac{log(\tau(V_n))}{|V_n|},
\end{equation}
\begin{equation}
 \liminf_{n\rightarrow\infty} \frac{log(\tau(V_n))}{|V_n|}.
\end{equation}

\end{corollary}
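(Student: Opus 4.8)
The plan is to read the claim off directly from Theorem~\ref{thm:asycomplexity}, whose dichotomy already pins down the growth of $\log(\tau(V_n))$ up to a multiplicative constant. The only thing that needs checking is that this translates into the sequence $\frac{\log(\tau(V_n))}{|V_n|}$ being bounded, since a bounded sequence of real numbers automatically has a finite $\limsup$ and a finite $\liminf$.

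First I would dispose of the degenerate case. If $V_1$ is a tree, then Theorem~\ref{thm:asycomplexity}(1) gives $\tau(V_n)=1$ for all $n\geq 0$, so $\frac{\log(\tau(V_n))}{|V_n|}=0$ for every $n$; the sequence is constant and both one-sided limits exist and equal $0$.

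Otherwise $V_1$ is not a tree, and Theorem~\ref{thm:asycomplexity}(2) gives $\log(\tau(V_n))\in\theta(|V_n|)$. Unwinding the $\theta$-notation, there are constants $0<c_1\leq c_2$ and an index $n_0$ with $c_1|V_n|\leq\log(\tau(V_n))\leq c_2|V_n|$ for all $n\geq n_0$. Since $V_n$ is a nonempty graph, $|V_n|>0$, so dividing through by $|V_n|$ yields $c_1\leq\frac{\log(\tau(V_n))}{|V_n|}\leq c_2$ for $n\geq n_0$. Adjoining the finitely many earlier terms, each of which is a finite real number because $\tau(V_n)\geq 1$ and $|V_n|>0$, shows that the whole sequence $\left\{\frac{\log(\tau(V_n))}{|V_n|}\right\}_{n\geq 0}$ is bounded. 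A bounded real sequence has a finite $\limsup$ and a finite $\liminf$, which is exactly the assertion.

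There is no genuine obstacle here; all the work has been done in Theorem~\ref{thm:asycomplexity}, and the corollary is just a matter of spelling out that a two-sided linear bound on $\log(\tau(V_n))$ in terms of $|V_n|$ forces boundedness of the quotient. The stronger statement that the actual limit exists (equivalently, that the $\limsup$ and $\liminf$ coincide) would require controlling the oscillation of $\frac{\log(\tau(V_n))}{|V_n|}$ as $n\to\infty$, which is presumably why only the existence of the two one-sided limits is claimed here, with the equality question left to the conjectures that close the section.
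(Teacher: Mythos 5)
Your argument is correct and is exactly the intended deduction: the paper states Corollary~\ref{cor:asycomplexity} as an immediate consequence of Theorem~\ref{thm:asycomplexity}, relying on the same observation that the dichotomy (either $\tau(V_n)=1$ identically or $\log(\tau(V_n))\in\theta(|V_n|)$) forces the sequence $\frac{\log(\tau(V_n))}{|V_n|}$ to be bounded, so its $\limsup$ and $\liminf$ are finite. Your write-up simply makes explicit the unwinding of the $\theta$-notation and the handling of the finitely many initial terms, which the paper leaves to the reader.
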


We conclude this section with a few conjectures. The first is the natural question to follow from
Corollary~\ref{cor:asycomplexity}. Is additionally requiring full symmetry enough to get convergence?

\begin{conjecture}\label{conj:asycomplexity}For a given fully symmetric self-similar structure on a finitely ramified fractal K, let $V_n$ denote its sequence of approximating graphs. The following limits exist.
\begin{equation}
 \lim_{n\rightarrow\infty} \frac{log(\tau(V_n))}{|V_n|}. 
\end{equation}
 
\end{conjecture}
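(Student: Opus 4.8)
The plan is to leverage the closed form of Theorem~\ref{thm:maintheoremfull} to reduce the existence of the limit to an asymptotic statement about the multiplicity functions $\alpha_n$ and $\beta_n^k$, and then to \emph{upgrade} the finiteness of the $\limsup$ and $\liminf$ (Corollary~\ref{cor:asycomplexity}, via Theorem~\ref{thm:asycomplexity}) to genuine convergence by ruling out oscillation. First I would dispose of the denominator: since $V_n$ consists of $m^n$ copies of $V_0$ glued along boundary vertices, the vertex count satisfies a first-order linear recursion $|V_n|=m|V_{n-1}|-c$ for a constant $c$ counting identifications, so $|V_n|/m^n\to\gamma$ for some $\gamma>0$. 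It therefore suffices to prove that $\log(\tau(V_n))/m^n$ converges.

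Next I would take logarithms in (\ref{eqn:maintheoremfull}) and split
\begin{equation*}
\log\tau(V_n)=\Big(\log\!\prodd{j}{|V_n|}-\log\!\sumd{j}{|V_n|}\Big)+\sum_{\alpha\in A}\alpha_n\log|\alpha|+T^B_n,
\end{equation*}
where $T^B_n=\sum_{\beta\in B}\big[(\sum_{k=0}^n\beta_n^k)\log|\beta|+(\sum_{k=0}^n\beta_n^k\tfrac{d^k-1}{d-1})\log|{-Q(0)/P_d}|\big]$. The factor $\sumd{j}{|V_n|}=2|E_n|$ grows like $m^n$, so its logarithm is $O(n)=o(m^n)$ and contributes nothing in the limit; for the numerator, Corollary~\ref{cor:simple} together with self-similarity shows that the number of vertices of each fixed degree grows geometrically with ratio $m$, so $\frac1{m^n}\log\prodd{j}{|V_n|}$ converges. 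Everything thus reduces to the asymptotics, after normalizing by $m^n$, of the multiplicity sums $\alpha_n$, $\sum_k\beta_n^k$, and $\sum_k\beta_n^k\frac{d^k-1}{d-1}$.

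The heart of the argument is that these multiplicity functions are \emph{polynomial--exponential} sequences. The propagation relation $mult_n(z)=mult_{n-1}(R(z))$ of Proposition~\ref{prop:inductionsteps}(1) gives $\beta_n^k=\beta_{n-1}^{k-1}$ at generic depths, while the exceptional cases inject the inhomogeneous terms $m^{n-1}mult_D(z)$ and $\pm|V_{n-1}|$, both of scale $m^{n-1}$. Unwinding this finite, triangular-in-$k$ linear system gives each seed $\beta_n^0\sim m^{n}$ and hence $\beta_n^k\sim m^{n-k}$, so that $\frac1{m^n}\sum_k\beta_n^k\to\frac{m}{m-1}\cdot(\text{const})$ is a convergent geometric series, and — provided the decimation degree satisfies $d<m$, as it does in each example computed here — so is $\frac1{m^n}\sum_k\beta_n^k d^k\sim\sum_k(d/m)^k$. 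Thus $\log\tau(V_n)$ is a finite $\ZZ$-linear combination of terms $n^a\rho^n$ with $\rho\in\{1,m,d\}$ and $m$ the strictly dominant scale, whence $\log(\tau(V_n))/m^n$ would converge to the coefficient of the pure $m^n$ term.

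The main obstacle is making this uniform over \emph{all} fully symmetric finitely ramified fractals, which is exactly the content missing from Corollary~\ref{cor:asycomplexity}. Two difficulties arise. First, one must verify $d\le m$ in general and treat the borderline $d=m$, where $\sum_k\beta_n^k d^k\sim n\,m^n$ unless the coefficient of the $n\,m^n$ term provably cancels; this cancellation is the delicate part of the $d=m$ case. Second, and more seriously, the case distinctions of Proposition~\ref{prop:inductionsteps} may reassign a given eigenvalue depth to different items as $n$ varies, so that the effective recurrence is governed not by a single constant-coefficient relation but by a nonnegative-integer transfer structure on the multiplicity vector; boundedness from Corollary~\ref{cor:asycomplexity} forces every growth rate to have modulus $\le m$ but does \emph{not} by itself exclude a pair of complex-conjugate roots of modulus exactly $m$, which would keep the sequence bounded while making it oscillate and leave $\limsup>\liminf$. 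To close this gap I would invoke Perron--Frobenius: the transfer matrix has nonnegative integer entries, so its spectral radius is a genuine real positive eigenvalue, and the conjecture reduces to proving this matrix is \emph{primitive} with the Perron root equal to $m$ and strictly dominant, so that the normalized multiplicity vector converges to the Perron eigenvector. Establishing this primitivity and strict dominance — necessarily using the full-symmetry hypothesis and the fine analytic structure of $R(z)$, not merely its degree — is where I expect the real difficulty to lie, and is precisely what separates the conjecture from the weaker, already-proven Corollary~\ref{cor:asycomplexity}.
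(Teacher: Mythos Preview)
The statement you are attempting to prove is labeled a \emph{Conjecture} in the paper; the author offers no proof and explicitly poses it as the natural open question following Corollary~\ref{cor:asycomplexity}. There is therefore nothing in the paper to compare your attempt against.

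As for the proposal itself, it is a plausible roadmap rather than a proof, and to your credit you say so: you correctly isolate the two genuine obstructions --- the unverified inequality $d\le m$ (with the delicate borderline $d=m$), and the absence of any argument that the dominant growth scale $m^n$ is strictly dominant and simple --- and you acknowledge that these are exactly what separates the conjecture from the already-established Corollary~\ref{cor:asycomplexity}. One technical point where your sketch is looser than it appears: the Perron--Frobenius step is not straightforwardly available, because the recursion coming from Proposition~\ref{prop:inductionsteps} is not the iteration of a fixed finite nonnegative matrix on a fixed vector. The number of active depths $k$ grows with $n$, and the exceptional cases contribute \emph{inhomogeneous} terms $m^{n-1}\,mult_D(z)$ and $\pm|V_{n-1}|$ additively, not multiplicatively. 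Recasting this as a genuine primitive-matrix iteration (or otherwise ruling out complex eigenvalues of modulus $m$) is itself nontrivial work that you have not supplied, so the gap you identify at the end is, if anything, wider than your phrasing suggests.
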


The family of fractal trees indexed by the number of branches they possess provide a nice class of examples, and is studied in \cite{FS10}.
These examples show that even though each $m$-Tree Fractal in the limit is topologically a tree, 
the number of spanning trees on the approximating graphs grows arbitrarily large. Now, considering 
the $3$-Tree Fractal with graph approximations $V_{3,n}$, using equation (\ref{eqn:wedge}) it is 
easy to verify that $\tau(V_{3,n})=3^{3^n}$ for $n\geq0,$ and the the asymptotic complexity constant is 
$\frac{log(3)}{2}.$ 

\begin{conjecture} For a given fully symmetric self-similar structure on a finitely ramified fractal K, 
let $V_n$ denote its sequence of approximating graphs, and $c_K$ denote the asymptotic complexity constant.
If $V_1$ is not a tree, then $$c_K\geq\frac{log(3)}{2}.$$
 \end{conjecture}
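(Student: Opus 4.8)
The plan is to reduce the assertion, for $|V_0|\geq 3$, to an elementary one-variable inequality, by pairing the lower bound on $\tau(V_n)$ already contained in the proof of Theorem~\ref{thm:asycomplexity} with a matching upper bound on $|V_n|$. Write $N_0:=|V_0|$. Since $V_1$ is not a tree, the proof of Theorem~\ref{thm:asycomplexity} shows $N_0\geq 3$, so $V_0$ is the complete graph $K_{N_0}$, $\tau(V_0)=N_0^{N_0-2}$ by Cayley's formula, and inequality~(\ref{eqn:lowerasy}) of that proof says $\tau(V_n)\geq\tau(V_0)^{m^n}$, i.e.\ $\log\tau(V_n)\geq (N_0-2)\,m^n\log N_0$ for all $n$. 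I would then prove the companion bound $|V_n|\leq (N_0-1)\,m^n+1$ as follows: write $V_n$ as the union of its $m^n$ level-$n$ cells, each a copy of $K_{N_0}$ (hence carrying exactly $N_0$ vertices, and every edge of $V_n$ lying in some cell), which by finite ramification meet pairwise only in vertices; for a vertex $v$ let $c(v)$ count the cells containing $v$, so $\sum_v c(v)=N_0 m^n$; since $V_n$ is connected so is the cell-adjacency graph (one node per cell, an edge whenever two cells share a vertex of $V_n$), and charging each of the $m^n-1$ edges of a spanning tree of it to a shared vertex of its two endpoint cells shows at most $c(v)-1$ edges are charged to any $v$, hence $\sum_v(c(v)-1)\geq m^n-1$, i.e.\ $N_0m^n-|V_n|\geq m^n-1$.

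Combining the two estimates, for each $n$ one has
\[
\frac{\log\tau(V_n)}{|V_n|}\ \geq\ \frac{(N_0-2)\,m^n\log N_0}{(N_0-1)\,m^n+1},
\]
and the right side increases in $n$ with limit $\frac{(N_0-2)\log N_0}{N_0-1}$; therefore $\liminf_n\frac{\log\tau(V_n)}{|V_n|}\geq\frac{(N_0-2)\log N_0}{N_0-1}$ (the liminf exists by Corollary~\ref{cor:asycomplexity}). A short calculus check shows $h(x):=\frac{(x-2)\log x}{x-1}$ is increasing on $[2,\infty)$, so $h(N_0)\geq h(3)=\frac{\log 3}{2}$; since $c_K$, when it exists, equals this liminf, $c_K\geq\frac{\log 3}{2}$. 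This bound is sharp exactly at $N_0=3$: for the $3$-Tree Fractal the cells are glued in a tree-of-cells pattern, so $|V_n|=2\cdot 3^n+1$, $\tau(V_n)=3^{3^n}$ and $c_K=\frac{\log 3}{2}$, while for $N_0\geq 4$ one already gets $c_K\geq h(4)=2\log 2>\frac{\log 3}{2}$.

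The main obstacle is the excluded case $N_0=2$, in which $\tau(V_0)=1$ renders the lower bound above vacuous. Here one would instead use that $V_n$ consists of $m^{n-k}$ copies of the level-$k$ block $V_k$, so (by the same argument as for (\ref{eqn:lowerasy})) $\log\tau(V_n)\geq m^{n-k}\log\tau(V_k)$ and hence $c_K\geq\gamma_K\sup_{k}m^{-k}\log\tau(V_k)$, where $\gamma_K=\lim_n m^n/|V_n|$; the hard part is to show this supremum is large enough in every $N_0=2$ example, since no single cell now does the job (for the Diamond fractal the series/parallel recursion gives $c_K=\log 2>\frac{\log 3}{2}$, but a uniform argument is still needed). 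A smaller point: the statement as phrased presupposes that the limit $c_K$ exists, which is Conjecture~\ref{conj:asycomplexity}; the argument above establishes the inequality unconditionally for $\liminf_n\frac{\log\tau(V_n)}{|V_n|}$, so the present conjecture reduces to Conjecture~\ref{conj:asycomplexity}.
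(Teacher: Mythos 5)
This statement is left as a \emph{conjecture} in the paper --- no proof is offered, only the observation that the $3$-Tree Fractal attains equality --- so your argument has to be judged on its own. For $N_0:=|V_0|\geq 3$ the reduction you propose is genuinely nice and essentially sound: the charging argument along a spanning tree of the cell-adjacency graph does give $|V_n|\leq (N_0-1)m^n+1$, and paired with the paper's lower bound $\log\tau(V_n)\geq (N_0-2)m^n\log N_0$ it yields $\liminf_n \log\tau(V_n)/|V_n|\geq h(N_0)$ with $h(x)=\frac{(x-2)\log x}{x-1}$ increasing on $[2,\infty)$ and $h(3)=\frac{\log 3}{2}$. (Small slip: $h(4)=\frac{4\log 2}{3}$, not $2\log 2$; the conclusion is unaffected.) The first genuine gap is the one you flag yourself: $N_0=2$ is not a removable corner case. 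The assertion inside the proof of Theorem~\ref{thm:asycomplexity} that ``$V_1$ not a tree implies $|V_0|>2$'' is contradicted by the Diamond fractal ($|V_0|=2$, $V_1=C_4$), which is one of the paper's featured examples. For $N_0=2$ your bound degenerates to $h(2)=0$ because each cell carries $\tau(K_2)=1$, so the whole mechanism (per-cell tree entropy $\log\tau(V_0)$ against per-cell vertex cost $N_0-1$) produces nothing; extracting the needed entropy from $V_1$ or deeper blocks is exactly the hard part, and it is only sketched. Until that case is closed the conjecture is not proved.

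A second, quieter weak point is that your lower bound is entirely inherited from inequality~(\ref{eqn:lowerasy}), which in the paper rests on the claim $\tau(V_n)\geq\tau(V_0\vee_{x,x}^{m^n}V_0)$ asserted as ``clear.'' It is not clear: the number of spanning trees is not monotone under arbitrary vertex identifications. For instance, $k$ copies of $K_3$ glued along a common triple of vertices form the $3$-vertex multigraph with $k$ parallel edges per pair, which has $3k^2$ spanning trees, and $3k^2<3^k$ for $k\geq 4$. The inequality is very plausibly true for the intersection patterns that actually occur in finitely ramified structures (e.g.\ it is immediate when distinct $1$-cells meet in at most one point), but since your entire estimate is this inequality raised to the power $m^n$, it needs an actual proof --- say, an injection from tuples of cell spanning trees into spanning trees of $V_n$, processing cells in a connected order --- or an explicit hypothesis restricting cell intersections. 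Finally, your closing remark is the right one: what the argument bounds is $\liminf_n\log\tau(V_n)/|V_n|$ (which exists by Corollary~\ref{cor:asycomplexity}), so even granting everything above, the statement as phrased still presupposes Conjecture~\ref{conj:asycomplexity} for the symbol $c_K$ to denote a limit.
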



\section{Examples}\label{section:ex}

\subsection{$\sierp$ Gasket} \label{Section: sierp}

The $\sierp$ gasket has been extensively studied (in \cite{St06, MR2451619, Ki01, Ra84, Be92, DSV99, FS92, Sh96, Te98}, among others.)  It can be constructed as a p.c.f. fractal, in the sense of Kigami \cite{Ki01}, in $\RR^2$ using the contractions
\begin{align*}
f_i(x)&=\frac{1}{2}(x-q_i)+q_i,
\end{align*}
for $i=1,2,3$, where the points $q_i$ are the vertices of an equilateral triangle.


\begin{figure}[h!]
\begin{center}
\epsfig{file=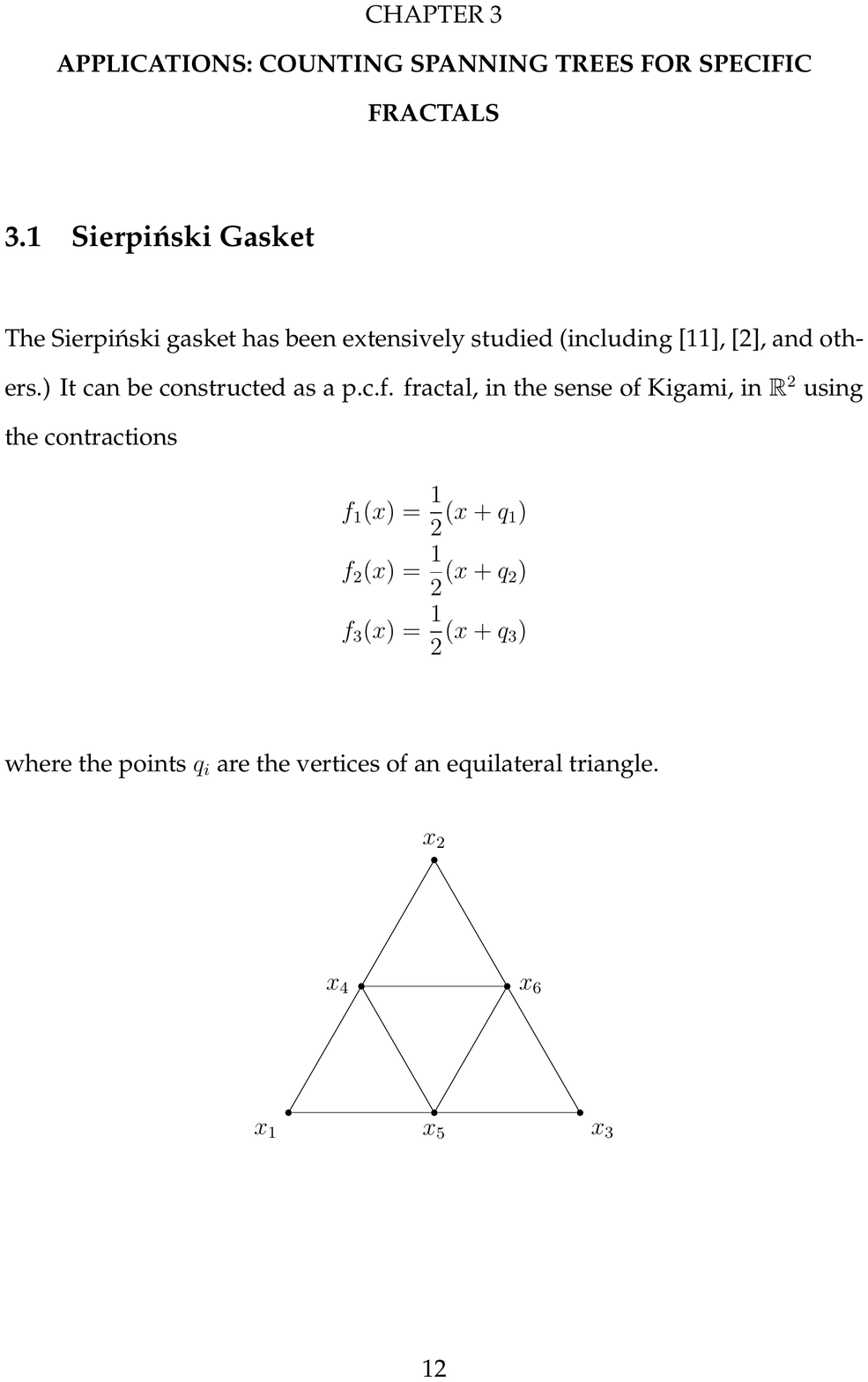, scale=.6}

\label{fig:pcfsierp1}
\caption{The $V_1$ network of $\sierp$ gasket.}
\end{center}

\end{figure}

In \cite{CCY07}, the following theorem was proven. Here we give a new proof using the 
method described in Section~\ref{ch:mainresult} to show how to use Theorem ~\ref{thm:maintheoremfull}.

\begin{theorem}\label{thm:spansierp}  The number of spanning trees on the $\sierp$ gasket at level $n$ is given by
$$\tau(V_n)=2^{f_n}\cdot 3^{g_n}\cdot 5^{h_n},\;\;\;\;\;\;n\geq 0$$
where
\begin{align*}
f_n&=\frac{1}{2}\left(3^n-1\right),
g_n=\frac{1}{4}\left(3^{n+1}+2n+1\right), \text{ and }
h_n=\frac{1}{4}\left(3^n-2n-1\right).\\
\end{align*}
\end{theorem}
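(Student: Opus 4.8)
The plan is to apply Theorem~\ref{thm:maintheoremfull} directly, so the work is almost entirely in running the spectral decimation machinery of Propositions~\ref{prop:existR} and~\ref{prop:inductionsteps} for this specific fractal and then bookkeeping the resulting exponents. First I would set up the $V_1$ network of the $\sierp$ gasket: $m=3$, $|V_0|=3$, and $|V_n|=\frac{3}{2}(3^n+1)$. Writing $P_1$ in block form with the $3\times 3$ identity in the boundary block and the $3\times 3$ matrix $D$ for the three junction points, I would compute the Schur complement $S(z)$, extract $\phi(z)$ and $R(z)$ from Proposition~\ref{prop:existR}, and find the well-known answer $R(z)=z(5-4z)$ (degree $d=2$, numerator leading coefficient $P_d=-4$, $Q(z)\equiv 1$ so $-Q(0)/P_d=1/4$); along the way I would identify $\sigma(D)$ and the zeros of $\phi$ to pin down the exceptional set $E(P_0,P_1)$.

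Next I would run the induction in Proposition~\ref{prop:inductionsteps} to determine the sets $A$ and $B$ and the multiplicity functions $\alpha_n$, $\beta_n^k$. For the $\sierp$ gasket this is a finite case check: the eigenvalue $z=3/2$ (the "forbidden" eigenvalue, satisfying item (2) or an analogous case) contributes with multiplicities following a simple recursion, giving $A=\{3/2\}$ with $\alpha_n$ an explicit closed form; the generating eigenvalue(s) $\beta$ (here essentially $\beta$ with $R(\beta)=0$ or the relevant seed from $\sigma(D)$) feed the preiterate towers, with $\beta_n^k$ determined by items (1)--(8). The degrees $d_j$ in $V_n$ are all $4$ except the three boundary vertices which have degree $2$, so $\prod_j d_j = 4^{|V_n|-3}\cdot 2^3$ and $\sum_j d_j = 4(|V_n|-3)+6 = 4|V_n|-6 = 6\cdot 3^n$, whose factorization into powers of $2$ and $3$ I would record.

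Then I would substitute all of this into equation~(\ref{eqn:maintheoremfull}): the factor $\left(\frac{-Q(0)}{P_d}\right)^{\sum_k \beta_n^k (d^k-1)/(d-1)} = (1/4)^{(\cdots)}$ contributes a power of $2$, the factor $\prod_{\alpha\in A}\alpha^{\alpha_n}=(3/2)^{\alpha_n}$ contributes powers of $2$ and $3$, the factor $\beta^{\sum_k\beta_n^k}$ contributes powers of $2,3,5$ depending on what $\beta$ is (the appearance of the prime $5$ traces back to $R(z)=z(5-4z)$ and the value $5/4$ or similar showing up as a $\beta$), and the degree prefactor contributes $2$'s and $3$'s. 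Collecting exponents of $2$, $3$, and $5$ separately and simplifying — using the geometric-type sums $\sum_{k=0}^n 2^k = 2^{n+1}-1$ and the linear-in-$n$ pieces coming from the multiplicity recursions — should reproduce exactly $f_n=\frac12(3^n-1)$, $g_n=\frac14(3^{n+1}+2n+1)$, $h_n=\frac14(3^n-2n-1)$. I would also sanity-check against small cases: $n=0$ gives $\tau(V_0)=\tau(K_3)=3$, matching $f_0=0$, $g_0=1$, $h_0=0$; $n=1$ should give the known spanning tree count of the $V_1$ graph.

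The main obstacle is the careful case analysis in Proposition~\ref{prop:inductionsteps}: correctly classifying the forbidden eigenvalue $z=3/2$ and each seed eigenvalue into the right one of the eight cases, and then solving the resulting multiplicity recursions for $\alpha_n$ and $\beta_n^k$ in closed form. The appearance of the linear-in-$n$ terms in $g_n$ and $h_n$ (as opposed to pure $3^n$ growth) must come out of exactly these recursions, so getting the boundary conditions and the case assignments right at each step is where errors would creep in; everything after that is mechanical exponent arithmetic.
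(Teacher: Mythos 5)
Your proposal follows essentially the same route as the paper: invoke Theorem~\ref{thm:maintheoremfull} with the known spectral decimation data $A=\{3/2\}$, $B=\{3/4,5/4\}$, $R(z)=z(5-4z)$, combine with the degree prefactor $2^{3^{n+1}-1}3^{-(n+1)}$, sum the exponents, and verify $n=0,1$ directly. The only cosmetic difference is that you propose to re-derive the multiplicity functions from Propositions~\ref{prop:existR} and~\ref{prop:inductionsteps}, whereas the paper imports them from the cited literature; this does not change the argument.
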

\begin{proof}[Proof of Theorem~\ref{thm:spansierp}]  Before applying Theorem~\ref{thm:maintheoremfull}, we make the
following observations.
It is well known that the $V_n$ network of the $\sierp$ gasket has
$$|V_n|=\frac{3^{n+1}+3}{2}\;\;\;\;\;n\geq 0$$
vertices, three of which have degree 2 and the remaining vertices have degree 4.  
Hence,
\begin{equation}\label{eqn:sierp3}
\frac{\displaystyle \prodd{i}{|V_n|}}{\displaystyle \sumd{i}{|V_n|}}=2^{3^{n+1}-1}\cdot 3^{-(n+1)}.
\end{equation}
In \cite{MR2450694}, they use a result from \cite{MR2451619} to carry out spectral decimation for the $\sierp$ gasket.  In our language, they showed that
\begin{align*}
A&=\left\{\frac{3}{2}\right\},
B=\left\{\frac{3}{4},\frac{5}{4}\right\},
\end{align*}

\begin{enumerate}[(I)]
\item\label{sfirst} $\alpha=\frac{3}{2}$, $\alpha_n=\frac{3^n+3}{2},\;\;\;\;\;n\geq 0$,
\item\label{ssecond} $\beta=\frac{3}{4}$,\;\;\;\;\; $n\geq 1$
\begin{equation*}\beta_n^k=\begin{cases} \frac{3^{n-k-1}+3}{2}&\;\;\;\;\;k=0,\ldots,n-1\\
0&\;\;\;\;\;k=n,\\
\end{cases}
\end{equation*}
\item\label{sthird} $\beta=\frac{5}{4}$,\;\;\;\;\; $n\geq 2$
\begin{equation*}\beta_n^k=\begin{cases} \frac{3^{n-k-1}-1}{2}&\;\;\;\;\;k=0,\ldots,n-2\\
0&\;\;\;\;\;k=n-1,n\\
\end{cases}
\end{equation*}
\end{enumerate}
and $R(z)=z(5-4z)$.  So $d=2$, $Q(0)=1$ and $P_d=-4$.\\
\\
We now use Equation~\ref{eqn:maintheoremfull} in Theorem~\ref{thm:maintheoremfull} to calculate $\tau(V_n)$.  We have
\begin{equation}\label{eqn:sierpalpha}
\prod_{\alpha\in A} \alpha^{\alpha_n}=\left(\frac{3}{2}\right)^{\displaystyle \frac{3^n+3}{2}}
\end{equation}
\begin{equation}\label{eqn:sierpbeta}
\begin{split}
\prod_{\beta\in B}&\left(\beta^{\sum_{k=0}^n\beta_n^k}\cdot\left(\frac{1}{4}\right)^{\sum_{k=0}^n\beta_n^k\left(2^k-1\right)}\right)=\\
&=\left(\frac{3}{4}\right)^{\displaystyle \sum_{k=0}^{n-1}\left(\frac{3^{n-k-1}+3}{2}\right)}\times\left(\frac{1}{4}\right)^{\displaystyle \sum_{k=0}^{n-1}\left(\frac{3^{n-k-1}+3}{2}\right)\left(2^k-1\right)}\\
&\times\left(\frac{5}{4}\right)^{\displaystyle \sum_{k=0}^{n-2}\left(\frac{3^{n-k-1}-1}{2}\right)}\times\left(\frac{1}{4}\right)^{\displaystyle \sum_{k=0}^{n-2}\left(\frac{3^{n-k-1}-1}{2}\right)\left(2^k-1\right)}\\
\end{split}
\end{equation}

We sum the expressions in the exponents above.
\begin{align*}
\sum_{k=0}^{n-1}\left(\frac{3^{n-k-1}+3}{2}\right)&=\frac{1}{4}\left(3^n+6n-1\right)\\
\sum_{k=0}^{n-1}\left(\frac{3^{n-k-1}+3}{2}\right)\left(2^k-1\right)&=\frac{1}{4}\left(3^n+2^{n+2}-6n-5\right)\\
\sum_{k=0}^{n-2}\left(\frac{3^{n-k-1}-1}{2}\right)&=\frac{1}{4}\left(3^n-2n-1\right)\\
\sum_{k=0}^{n-2}\left(\frac{3^{n-k-1}-1}{2}\right)\left(2^k-1\right)&=\frac{1}{4}\left(3^n-2^{n+2}+2n+3\right).
\end{align*}
All of these equations are valid for $n\geq 2$.  Using equations~\ref{eqn:maintheoremfull},~\ref{eqn:sierp3},~\ref{eqn:sierpalpha},and~\ref{eqn:sierpbeta}, and simplifying we get:
$$\tau(V_n)=2^{f_n}\cdot 3^{g_n}\cdot 5^{h_n}\;\;\;\;\;n\geq 2,$$
as desired.For $n=1$, equation~\ref{eqn:sierp3} still holds and the eigenvalues of the probabilistic graph Laplacian are $\{\frac{3}{2},\frac{3}{2},\frac{3}{2},\frac{3}{4},\frac{3}{4},0\}.$  So by Theorem~\ref{thm:matrixtree}, we get that $\tau(V_1)=2\cdot 3^3$.  The $V_0$ network is the complete graph on 3 vertices, thus $\tau(V_0)=3$. Hence the theorem holds for all $n\geq 0$.
\end{proof}

As in \cite{CCY07}, we immediately have the following Corollary.

\begin{corollary} The asymptotic growth constant for the $\sierp$ Gasket is 
\begin{equation}
c=\frac{log(2)}{3}+\frac{log(3)}{2}+\frac{log(5)}{6}
 \end{equation}
\end{corollary}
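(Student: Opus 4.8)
The plan is to substitute the closed form for $\tau(V_n)$ from Theorem~\ref{thm:spansierp} into the defining limit $\lim_{n\to\infty} log(\tau(V_n))/|V_n|$ and evaluate it term by term. Taking logarithms of $\tau(V_n)=2^{f_n}\cdot 3^{g_n}\cdot 5^{h_n}$ gives $log(\tau(V_n))=f_n\,log(2)+g_n\,log(3)+h_n\,log(5)$, so that
$$\frac{log(\tau(V_n))}{|V_n|}=\frac{f_n}{|V_n|}\,log(2)+\frac{g_n}{|V_n|}\,log(3)+\frac{h_n}{|V_n|}\,log(5),$$
and it suffices to compute the three limits $\lim_{n\to\infty}f_n/|V_n|$, $\lim_{n\to\infty}g_n/|V_n|$, and $\lim_{n\to\infty}h_n/|V_n|$.

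Next I would use the known vertex count $|V_n|=\frac{3^{n+1}+3}{2}$ for the \sierp\ gasket, so that $|V_n|\sim\frac{3}{2}\cdot 3^n$. Each exponent $f_n=\frac12(3^n-1)$, $g_n=\frac14(3^{n+1}+2n+1)$, $h_n=\frac14(3^n-2n-1)$ is governed entirely by its $3^n$ term, since the remaining linear and constant contributions are $o(3^n)$ and hence disappear after dividing by $|V_n|$. Comparing leading terms then gives $f_n/|V_n|\to\frac{1/2}{3/2}=\frac13$, $g_n/|V_n|\to\frac{3/4}{3/2}=\frac12$, and $h_n/|V_n|\to\frac{1/4}{3/2}=\frac16$; in particular all three limits exist, so the asymptotic complexity constant exists and equals $c=\frac{log(2)}{3}+\frac{log(3)}{2}+\frac{log(5)}{6}$.

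There is essentially no obstacle here beyond bookkeeping: the only point requiring a word of justification is that the subexponential corrections $2n+1$, $-2n-1$, and the constants genuinely wash out, which is immediate from $n/3^n\to 0$. One could instead argue directly from Theorem~\ref{thm:maintheoremfull} together with the exponent sums computed inside the proof of Theorem~\ref{thm:spansierp}, but running the computation through the already-simplified formula for $\tau(V_n)$ is the shortest path.
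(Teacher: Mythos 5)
Your proposal is correct and is exactly the computation the paper intends: the corollary is stated as an immediate consequence of Theorem~\ref{thm:spansierp}, and the intended argument is precisely to divide $\log(\tau(V_n))=f_n\log 2+g_n\log 3+h_n\log 5$ by $|V_n|=\frac{3^{n+1}+3}{2}$ and compare leading $3^n$ terms. The limits $\frac{1}{3},\frac{1}{2},\frac{1}{6}$ you obtain match the stated constant, and your remark that the subexponential corrections wash out is the only justification needed.
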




\subsection{A Non-p.c.f. Analog of the Sierpi\'{n}ski Gasket}\label{section:nonpcf}
As described in \cite{MR2451619, BST99,Te08}, this fractal is finitely ramified by not p.c.f. in the sense of Kigami.  It can be constructed as a self-affine fractal in $\RR^2$ using 6 affine contractions.  One affine contraction has the fixed point $(0,0)$ and the matrix
\begin{equation*}
\begin{pmatrix}
\frac{1}{2}&\frac{1}{6}\\
\frac{1}{4}&\frac{1}{4}
\end{pmatrix},
\end{equation*}
and the other five affine contractions can be obtained though combining this one with the symmetries of the equilateral triangle on vertices $(0,0)$, $(1,0)$ and $\left(\frac{1}{2},\frac{\sqrt{3}}{2}\right)$.  Figure~\ref{fig:sierp1} shows the $V_1$ network for this fractal.


\begin{figure}[h!]
\centering
\epsfig{file=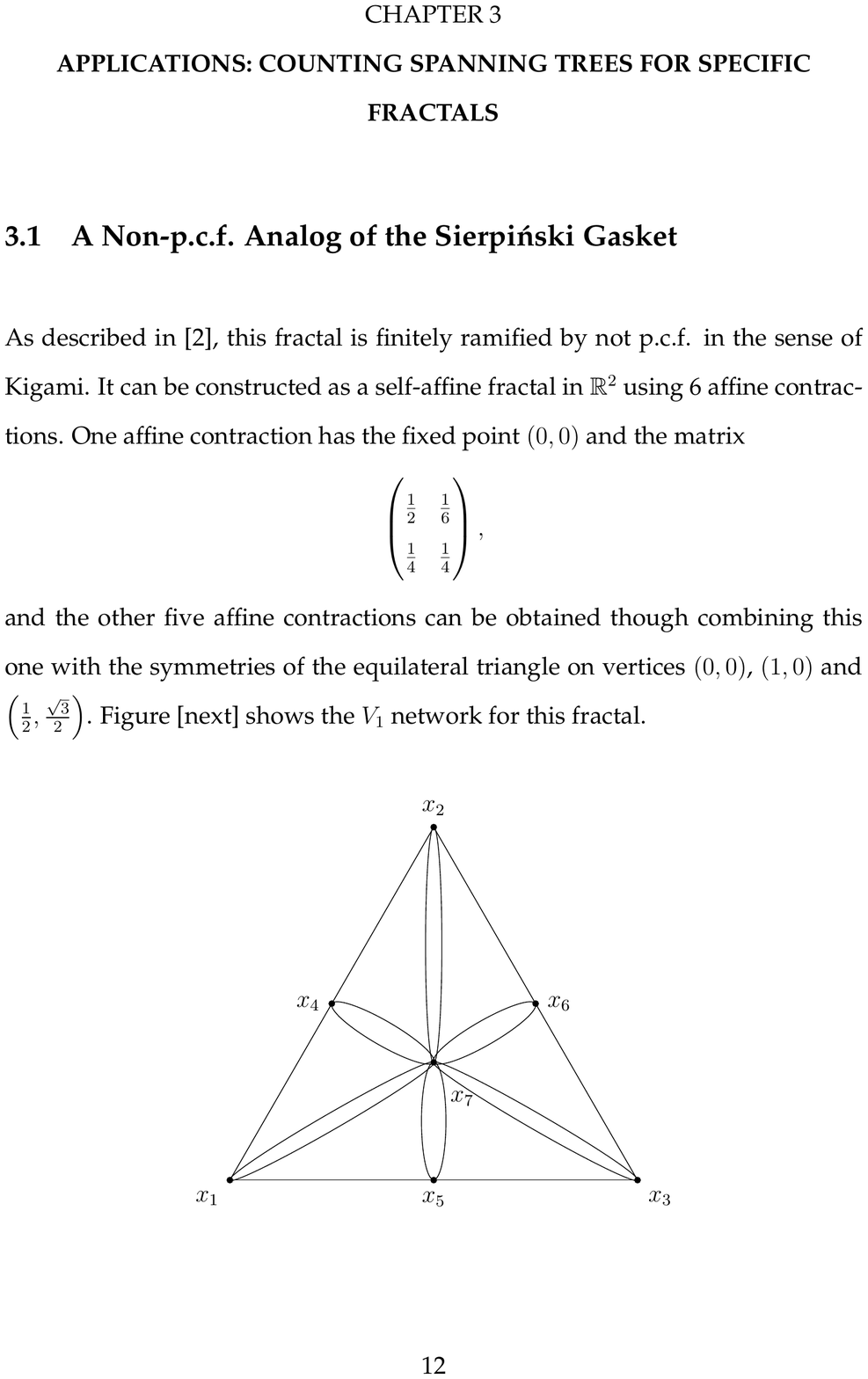, scale=.5}
\label{fig:sierp1}
\\
\caption{The $V_1$ network of the non-p.c.f. analog of the Sierpi\'{n}ski gasket.}

\end{figure}

\begin{theorem}\label{thm:nonpcf}  The number of spanning trees on the non-p.c.f. analog of the Sierpi\'{n}ski gasket at level n is given by

$$\tau(V_n)=2^{f_n}\cdot 3^{g_n}\cdot 5^{h_n},\;\;\; n\geq 0$$
where

\begin{align*}
f_n&=\frac{2}{25}\left(11\cdot 6^n-30n-11\right),\text{ }
g_n=\frac{1}{5}\left(2\cdot 6^n+3\right),\text{ and}\\
h_n&=\frac{1}{25}\left(4\cdot 6^n+30n-4\right).
\end{align*}
\end{theorem}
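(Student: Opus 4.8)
The plan is to follow verbatim the strategy used in the proof of Theorem~\ref{thm:spansierp}: feed the spectral decimation data for this fractal into the master formula~(\ref{eqn:maintheoremfull}) of Theorem~\ref{thm:maintheoremfull} and simplify. First I would record the combinatorial data of the approximating graphs $V_n$. Since this fractal is built from $m=6$ affine contractions, $|V_n|$ grows like $6^n$; one writes down $|V_n|$ explicitly, identifies how many vertices carry each degree (the three vertices of the initial triangle have small degree and the interior junction points have a larger, constant degree), and assembles the prefactor $\frac{\prodd{j}{|V_n|}}{\sumd{j}{|V_n|}}$ as an explicit product of powers of $2$, $3$, $5$. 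This is the analog of equation~(\ref{eqn:sierp3}).

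Next I would invoke the spectral decimation for this fractal, which is carried out in \cite{MR2451619} and which, in the language of Proposition~\ref{prop:existR} and Proposition~\ref{prop:inductionsteps}, supplies the rational function $R(z)$ coming from the Schur complement, its degree $d$, the leading coefficient $P_d$ of its numerator and the constant term $Q(0)$ of its denominator, the exceptional set $E(P_0,P_1)$, the finite sets $A,B\subset\RR$, and — after running items (1)--(8) of Proposition~\ref{prop:inductionsteps} — the multiplicity functions $\alpha_n$ for $\alpha\in A$ and $\beta_n^k$ for $\beta\in B$. (Here it is $d$, together with the base $6$, that produces the $6^n$ and $2^k$ that must appear in the exponents of the answer.) By Lemma~4.9 of \cite{MR1997913} and Corollary~1 of \cite{HST11}, $R(z)$ satisfies the hypotheses of Lemma~\ref{prop:rationals}, so the product of $n$-th preiterates of each $\beta$ collapses exactly as in that lemma.

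Then I would substitute all of this into~(\ref{eqn:maintheoremfull}) and evaluate the finite sums $\sum_{k=0}^{n}\beta_n^k$ and $\sum_{k=0}^{n}\beta_n^k\left(\frac{d^k-1}{d-1}\right)$ occurring in the exponents. As in the Sierpi\'{n}ski case, each of these is a combination of a geometric series in $6$, a geometric series in $2$, and a term linear in $n$, so closed forms drop out by routine algebra; collecting the contributions to the primes $2$, $3$, and $5$ and simplifying should yield the claimed $f_n$, $g_n$, $h_n$. Finally, exactly as in the proof of Theorem~\ref{thm:spansierp}, the small cases must be handled separately, since the multiplicity formulas are only valid from some $n$ onward: $V_0$ is the complete graph on the three boundary vertices, so $\tau(V_0)=3$, and $\tau(V_1)$ is computed directly from Theorem~\ref{thm:matrixtree} using the $V_1$ spectrum; one then checks these agree with the formula.

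The main obstacle is the spectral-decimation bookkeeping. Determining $E(P_0,P_1)$ and then, for each exceptional value, deciding which of the eight cases of Proposition~\ref{prop:inductionsteps} applies — hence pinning down $A$, $B$, and the explicit piecewise formulas for $\alpha_n$ and $\beta_n^k$ — is genuinely fractal-specific, and is more delicate here than for the p.c.f. gasket because the matrix $D$ is larger and the fractal is not p.c.f. Once that data is secured, the remainder is the (lengthy but routine) summation of the exponents and the base-case verification.
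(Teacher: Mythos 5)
Your proposal follows the paper's proof essentially verbatim: the paper establishes the vertex count and degree distribution of $V_n$ in Lemma~\ref{thm:vertexdegrees1} and Corollary~\ref{cor:vertexdegree1}, imports the spectral decimation data from \cite{MR2451619} (namely $A=\{\tfrac{3}{2}\}$, $B=\{\tfrac{3}{4},\tfrac{5}{4},\tfrac{1}{2},1\}$, $R(z)=\frac{-24z(z-1)(2z-3)}{14z-15}$, so $d=3$, $Q(0)=-15$, $P_d=-48$), evaluates the exponent sums, and verifies $n=0,1$ directly, exactly as you outline. One correction to your expectations, though it does not derail the plan: the interior vertices do \emph{not} all share one constant degree here --- a vertex's degree depends on the level at which it is born (degrees $2^{n-k+2}$ and $3\cdot 2^{n-k+2}$ for $1\leq k\leq n$), and the three boundary vertices carry the \emph{largest}, unbounded degree $2^{n+1}$ rather than a small one, which is precisely why the paper needs a separate inductive construction lemma for the degree count.
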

Before the proof, we need a few results.
\begin{lemma}\label{thm:vertexdegrees1}  The $V_n$ network of the non-p.c.f. analog of the Sierpi\'{n}ski gasket, for $n\geq 0$, has
$$\frac{4\cdot 6^n+11}{5}$$
vertices.  Among these vertices,
\begin{enumerate}[(i)]
\item $3$ have degree $2^{n+1}$,
\item $6^{k-1}$ have degree $3\cdot 2^{n-k+2}$ for $1\leq k\leq n$, and
\item $3\cdot 6^{k-1}$ have degree $2^{n-k+2}$ for $1\leq k\leq n$.
\end{enumerate}
\end{lemma}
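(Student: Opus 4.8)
The plan is to prove all three assertions simultaneously by induction on $n$, tracking not just the vertex count but the full degree distribution, since the degrees at level $n+1$ are determined by how the six copies of $V_n$ are glued along the fixed-point set $F_0$. First I would establish the base case $n=0$: the graph $V_0$ is the complete graph on the $3$ vertices of $F_0$, so it has $\frac{4\cdot 6^0+11}{5}=3$ vertices, each of degree $2 = 2^{0+1}$, which matches (i) with the sums in (ii) and (iii) being empty. For the inductive step I would use the self-similar construction: $V_{n+1}$ consists of six isometric copies $f_j(V_n)$, and two copies meet only along $f_j(F_0)\cap f_k(F_0)$, which by the finite ramification and fixed-point conditions consists of single shared vertices. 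So each of the three ``outer'' fixed points of $V_{n+1}$ lies in exactly one copy and keeps that copy's boundary degree, while the newly-created junction vertices are each shared by exactly two copies and so acquire twice a boundary-type degree.

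The arithmetic core of the induction is then: (a) the vertex count recursion $|V_{n+1}| = 6|V_n| - (\text{number of identified pairs})$, and one checks from $\frac{4\cdot 6^{n+1}+11}{5} = 6\cdot\frac{4\cdot 6^n+11}{5} - \frac{4\cdot 6^n + 55}{5}$ that exactly $\frac{4\cdot 6^n+55}{5}$ identifications occur, consistent with the number of $F_0$-images that get glued; (b) the degree bookkeeping. A vertex of $V_n$ of degree $d$ that sits in the interior of a copy $f_j(V_n)$ contributes a vertex of degree $d$ to $V_{n+1}$ — these are the ``old'' interior vertices, and indexing copies by $k\mapsto k+1$ shifts the ranges in (ii) and (iii) appropriately and multiplies their counts by $6$. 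A vertex of $V_n$ lying in $F_0$ has some boundary degree $d_{\partial}(n)$; when it becomes a genuine junction of two copies in $V_{n+1}$ it has degree $2d_{\partial}(n)$, and when it remains one of the $3$ outermost points it has degree $d_{\partial}(n)$. So I must compute the boundary degree $d_\partial(n)$, i.e.\ the degree in $V_n$ of a point of $F_0$: from (i) this is $2^{n+1}$, and indeed the edges incident to such a boundary point all come from the single copy containing it, giving the recursion $d_\partial(n+1) = d_\partial(n)$ contribution from within plus — no, more carefully, $d_\partial(n+1)=2^{n+2}=2\cdot 2^{n+1}$, which is exactly $2 d_\partial(n)$; this is consistent because in $V_{n+1}$ an outer fixed point of $K$ is an outer fixed point of exactly one subcopy but that subcopy's ``boundary'' point is itself internally a doubled junction at the previous scale. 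I would verify this doubling claim directly from the $V_1$ picture (Figure \ref{fig:sierp1}) as the true base of the degree recursion.

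Assembling: the $3$ outer vertices of $V_{n+1}$ get degree $2^{n+1}$ from $V_n$'s boundary degree, but must be promoted to $2^{n+2}$ — this is where I must be careful, and I expect this to be the main obstacle: correctly identifying, for the specific six-map construction of the non-p.c.f.\ gasket, exactly which $F_0$-images of the six subcopies get identified in pairs and which remain outer, and confirming the geometric claim that each outer point of $V_{n+1}$, though lying in one subcopy, has degree $2^{n+2}$ there rather than $2^{n+1}$. Once the gluing pattern is pinned down (three pairwise identifications along the ``midpoints'' of the triangle plus additional internal identifications forced by the affine maps, totalling $\frac{4\cdot 6^n+55}{5}$ points), the rest is routine: substitute $d=2^{n-k+2}$ with its count $3\cdot 6^{k-1}$ and $d=3\cdot 2^{n-k+2}$ with its count $6^{k-1}$, shift $k\to k+1$, add the $k=n+1$ terms coming from the former boundary vertices now doubled, and observe that the resulting counts and degrees are precisely (i)--(iii) with $n$ replaced by $n+1$. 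Finally I would double-check the total $\sum (\deg) = 2|E(V_{n+1})| = 2\cdot 3\cdot 6^{n+1}$ (each of the $6^{n+1}$ copies of the triangle $V_0$ contributing $3$ edges) as an independent consistency check on the degree distribution.
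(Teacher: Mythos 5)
Your overall strategy (induction on $n$ via the decomposition of $V_{n+1}$ into six glued copies of $V_n$) can be made to work, but the specific gluing combinatorics you commit to are wrong for this fractal, and they are wrong in exactly the place that produces the two distinct degree classes in the statement. You assume that each newly created junction vertex is shared by exactly two subcopies and that each outer fixed point lies in exactly one subcopy. For the non-p.c.f.\ gasket neither holds: reading off the $V_1$ network (seven vertices with degree sequence $4,4,4,4,4,4,12$ and $6\cdot 3=18$ edges), the three outer corners and the three edge-midpoints each lie in \emph{two} of the six cells (so an outer point's degree is $2\cdot 2^{n+1}=2^{n+2}$, consistent with (i), and a midpoint's is likewise $2^{n+2}$, consistent with (iii) at $k=1$), while the single central junction lies in all \emph{six} cells, giving it degree $6\cdot 2^{n+1}=3\cdot 2^{n+2}$ at birth. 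Under your ``pairs only'' hypothesis the degree-$3\cdot 2^{n-k+2}$ class in (ii) never appears at all, so the induction cannot close. Your identification count is also arithmetically false: $6|V_n|-|V_{n+1}|=\frac{(24\cdot 6^n+66)-(24\cdot 6^n+11)}{5}=11$ for every $n$ (the $18$ boundary images of the six subcopies collapse to the $7$ junction vertices of the level-one cell structure), not $\frac{4\cdot 6^n+55}{5}$; indeed the right-hand side of your displayed identity equals $\frac{20\cdot 6^n+11}{5}$, not $\frac{4\cdot 6^{n+1}+11}{5}$. You do flag ``pinning down the gluing pattern'' as the main obstacle, but that is not a technicality to be deferred --- it is the entire content of the lemma.

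For comparison, the paper avoids the copy-gluing picture and instead uses a subdivision bookkeeping argument: each triangular cell at level $n-1$ is replaced by six triangles, ``giving birth'' to three new corners of degree $4$ and one new center of degree $12$, and every previously born vertex gains a number of edges that doubles at each subsequent level ($2^{k-1}\cdot 4$ for corners and $2^{k-1}\cdot 12$ for centers born $k$ levels earlier, and $2^n$ total gain for the three original corners); summing the resulting geometric series immediately yields (i)--(iii). If you want to keep your six-copies induction, the fix is to establish from the $V_1$ picture that the gluing pattern is: three outer points each in two copies, three midpoints each in two copies, one center in six copies, and then rerun your bookkeeping with those multiplicities; your consistency check $\sum\deg=2\cdot 3\cdot 6^{n+1}$ will then pass.
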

\begin{proof}[Proof of Lemma~\ref{thm:vertexdegrees1}] 
We first describe how the $V_n$ network is constructed, then prove the Lemma. 

For $n=0$, $V_0$ is the complete graph on vertices \{$x_1,x_2,x_3$\}, one triangle (the $V_0$ network) and 3 corners of degree 2 \{$x_1,x_2,x_3$\} are born at level 0. 

For $n=1$, from the triangle born on level 0, 6 triangles are born. For example one of these triangles is the complete graph on \{$x_2,x_4,x_7$\}. 3 corners of degree 4 are born, they are \{$x_4,x_5,x_6$\} and one center is born \{$x_7$\} of degree 12. 

For $n\geq2$, from each triangle born at level $n-1$, 6 triangles are born, 3 corners of degree 4 are born and 1 center of degree 12 is born. Each corner born at level $n-1$ gains 4 edges. Each center born at level $n-1$ gains 12 edges. Each corner born at level $n-2$ gains $2\cdot4$ edges. Each center born at level $n-2$ gains $2\cdot12$ edges. In general, for $1\leq k\leq n-1$, each corner born at level $n-k$ gains $2^{k-1}\cdot4$ edges, and each center born at level $n-k$ gains $2^{k-1}\cdot12$ edges. The corners born at level $0$ gain $2^n$ edges. 

From this construction we see that, for $n\geq0$ the $V_n$ network has 
\begin{equation*}
 3+4\cdot\sum_{j=0}^{n-1}6^j= \frac{4\cdot6^n+11}{5}
\end{equation*}
vertices, as desired. 
\\
On the $V_n$ network, for $n\geq0$, the 3 corners born on level 0 have degree
\begin{equation*}
 2+\sum_{j=1}^{n}2^j=2^{n+1},
\end{equation*}
which verifies item (i).
\\
Following the construction, we see that on the $V_n$ network, for $n\geq1$, there are $6^{n-1}$ centers born at level $n$, each with degree 12. There are $6^{n-2}$ centers born at level $n-1$, each with degree $12+12$. In general, for $0\leq k\leq n$, there are $6^{n-k-1}$ centers born at level n-k, each with degree
\begin{equation*}
 12 + 12\cdot \sum_{j=0}^{k-1} 2^j = 3\cdot2^{k+2}.
\end{equation*}
After changing indices, item (ii) follows, noting that item (ii) is a vacuous statement for $n=0$. 
\\
Similarly, for $0\leq k\leq n$, in the $V_n$ network, there are $3\cdot6^{n-k-1}$ corners born at level $n-k$. Each of which have degree
\begin{equation*}
 4 + 4\cdot \sum_{j=0}^{k-1}2^j = 2^{k+2}.
\end{equation*}
After changing indices, item (iii) follows, noting that item (iii) is a vacuous statement for $n=0$. 
\end{proof}

\begin{corollary}\label{cor:vertexdegree1} For the $V_n$ network of the non-p.c.f. analog of the Sierpi\'{n}ski gasket, for $n\geq 1$, we have
\begin{equation*}
\frac{\prodd{j}{|V_n|}}{\sumd{j}{|V_n|}}=2^{\frac{1}{25}\left(44\cdot 6^n+30n+6\right)}\cdot 3^{\frac{1}{5}\left(6^n-5n-6\right)}.
\end{equation*}

\end{corollary}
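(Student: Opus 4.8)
The plan is to extract the degree multiset of $V_n$ from Lemma~\ref{thm:vertexdegrees1}, evaluate the numerator $\prodd{j}{|V_n|}$ and the denominator $\sumd{j}{|V_n|}$ separately as integers of the form $2^{a}3^{b}$, and then divide. We work throughout with $n\ge 1$, since parts (ii)--(iii) of Lemma~\ref{thm:vertexdegrees1} are vacuous at $n=0$, which is exactly why the statement is restricted to $n\ge 1$.

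For the numerator, Lemma~\ref{thm:vertexdegrees1} gives
$$\prodd{j}{|V_n|}=\bigl(2^{n+1}\bigr)^{3}\cdot\prod_{k=1}^{n}\bigl(3\cdot 2^{n-k+2}\bigr)^{6^{k-1}}\cdot\prod_{k=1}^{n}\bigl(2^{n-k+2}\bigr)^{3\cdot 6^{k-1}}.$$
Collecting the powers of $3$ gives $3^{(6^{n}-1)/5}$, using $\sum_{k=1}^{n}6^{k-1}=\frac{6^{n}-1}{5}$, and collecting the powers of $2$ gives exponent $3(n+1)+4\sum_{k=1}^{n}(n-k+2)6^{k-1}$, where the coefficient $4$ records weight $1$ from the centers of part (ii) and weight $3$ from the corners of part (iii). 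The one nonroutine step is the arithmetico-geometric sum, which I would evaluate by writing $\sum_{k=1}^{n}(n-k+2)6^{k-1}=(n+1)\sum_{j=0}^{n-1}6^{j}-\sum_{j=0}^{n-1}j\,6^{j}$ and applying the standard closed forms for a geometric and an arithmetico-geometric series; this yields $\sum_{k=1}^{n}(n-k+2)6^{k-1}=\frac{1}{25}\bigl(11\cdot 6^{n}-5n-11\bigr)$, so the exponent of $2$ in the numerator is $\frac{1}{25}\bigl(44\cdot 6^{n}+55n+31\bigr)$.

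For the denominator I would use the identity $\sumd{j}{|V_n|}=2\cdot(\text{number of edges of }V_n)$ together with the fact that $V_n$ is built by gluing $6^{n}$ copies of the triangle $V_0$ (which has $3$ edges) along vertices, so $V_n$ has $3\cdot 6^{n}$ edges and $\sumd{j}{|V_n|}=6^{n+1}=2^{n+1}3^{n+1}$. Alternatively one can sum the degree multiset of Lemma~\ref{thm:vertexdegrees1} directly, using $\sum_{k=1}^{n}6^{k-1}2^{-k}=\frac{1}{4}(3^{n}-1)$; this reproduces $6^{n+1}$ and serves as a consistency check on Lemma~\ref{thm:vertexdegrees1}.

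Dividing, the exponent of $2$ becomes $\frac{1}{25}\bigl(44\cdot 6^{n}+55n+31\bigr)-(n+1)=\frac{1}{25}\bigl(44\cdot 6^{n}+30n+6\bigr)$ and the exponent of $3$ becomes $\frac{1}{5}\bigl(6^{n}-1\bigr)-(n+1)=\frac{1}{5}\bigl(6^{n}-5n-6\bigr)$, which is precisely the claimed identity. The main obstacle is purely bookkeeping: keeping straight which vertex class contributes with which weight to the power of $2$, and carrying out the arithmetico-geometric summation correctly; no conceptual difficulty arises, and the result will feed directly into the prefactor of Kirchhoff's theorem in the proof of Theorem~\ref{thm:nonpcf}.
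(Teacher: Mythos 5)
Your computation is correct and is exactly the (omitted) calculation the paper intends: the corollary is stated as an immediate consequence of Lemma~\ref{thm:vertexdegrees1}, obtained by multiplying and summing the degree multiset given there, and your evaluation of the arithmetico-geometric sum $\sum_{k=1}^{n}(n-k+2)6^{k-1}=\frac{1}{25}\left(11\cdot 6^{n}-5n-11\right)$ and of the degree sum $6^{n+1}$ reproduces both claimed exponents. No gap; this matches the paper's approach.
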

We are now ready for the proof of the main theorem in this section.
\begin{proof}[Proof of Theorem~\ref{thm:nonpcf}]  We apply Theorem~\ref{thm:maintheoremfull}.  In \cite{MR2451619}, they use a result from \cite{MR2450694} to carry out spectral decimation for the non-p.c.f. analog of the Sierpi\'{n}ski gasket.  In our language, they showed that 
\begin{align*}
A&=\left\{\frac{3}{2}\right\},\text{ and }
B=\left\{\frac{3}{4},\frac{5}{4},\frac{1}{2},1\right\}.
\end{align*}
Rephrasing their results in our language, for $n\geq 2$ the following hold:
\begin{enumerate}[(I)]
\item\label{first} $\alpha=\frac{3}{2}$,\;\; $\alpha_n=6^{n-1}+1$,
\item\label{second} $\beta=\frac{3}{4}$,
\begin{equation*}\beta_n^k=\begin{cases} 6^{n-k-2}+1&\;\;\;\;\;k=0,\ldots,n-2\\
2&\;\;\;\;\;k=n-1\\
0&\;\;\;\;\;k=n,\\
\end{cases}
\end{equation*}
\item\label{third} $\beta=\frac{5}{4}$,
\begin{equation*}\beta_n^k=\begin{cases} 6^{n-k-2}+1&\;\;\;\;\;k=0,\ldots,n-2\\
2&\;\;\;\;\;k=n-1\\
0&\;\;\;\;\;k=n,\\
\end{cases}
\end{equation*}
\item\label{fourth} $\beta=\frac{1}{2}$,
\begin{equation*}\beta_n^k=\begin{cases} \frac{\displaystyle 11\cdot 6^{n-k-2}-6}{\displaystyle 5}&\;\;\;\;\;k=0,\ldots,n-2\\
0&\;\;\;\;\;k=n-1,n,\\
\end{cases}
\end{equation*}
\item\label{fifth} $\beta=1$,
\begin{equation*}\beta_n^k=\begin{cases} \frac{\displaystyle 6^{n-k}-6}{\displaystyle 5}&\;\;\;\;\;k=0,\ldots,n-2\\
0&\;\;\;\;\;k=n-1,n,\\
\end{cases}
\end{equation*}
\end{enumerate}
and
\begin{equation*}
R(z)=\frac{\displaystyle -24z(z-1)(2z-3)}{\displaystyle 14z-15}.
\end{equation*}
So $d=3$, $Q(0)=-15$ and $P_d=-48$.

We now use Equation~\ref{eqn:maintheoremfull} in Theorem~\ref{thm:maintheoremfull} to calculate $\tau(V_n)$.  We have from (\ref{first}),
\begin{equation}
\prod_{\alpha\in A} \alpha^{\alpha_n}=\left(\frac{3}{2}\right)^{6^{n-1}+1}.
\end{equation}
From (\ref{second}),(\ref{third}),(\ref{fourth}), and (\ref{fifth}), and to calculate
\begin{equation}
\begin{split}
\prod_{\beta\in B}&\left(\beta^{\sum_{k=0}^n\beta_n^k}\cdot\left(\frac{15}{48}\right)^{\sum_{k=0}^n\beta_n^k\left(\frac{d^k-1}{d-1}\right)}\right),\\
\end{split}
\end{equation}
the relevant summations are, 

\begin{align*}
\left[\sum_{k=0}^{n-2}\left(6^{n-k-2}+1\right)\right]+2&=\frac{1}{5}\left(6^{n-1}+5n+4\right),\\
\left[\sum_{k=0}^{n-2}\left(6^{n-k-2}+1\right)\left(\frac{3^k-1}{2}\right)\right]+\left(3^{n-1}-1\right)&=\frac{1}{60}\left(4\cdot 6^{n-1}+65\cdot 3^{n-1}-30n-39\right),\\
\sum_{k=0}^{n-2}\frac{11\cdot 6^{n-k-2}-6}{5}&=\frac{1}{25}\left(11\cdot 6^{n-1}-30n+19\right),\\
\sum_{k=0}^{n-2}\left(\frac{11\cdot 6^{n-k-2}-6}{5}\right)\left(\frac{3^k-1}{2}\right)&=\frac{1}{25}\left(22\cdot 6^{n-2}-50\cdot 3^{n-2}+15n-2\right),\text{ and}\\
\sum_{k=0}^{n-2}\left(\frac{6^{n-k}-6}{5}\right)\left(\frac{3^k-1}{2}\right)&=\frac{1}{50}\left(4\cdot 6^n-25\cdot 3^n+30n+21\right).
\end{align*}

All of these equations are valid for $n\geq 2$ and combining with Corollary~\ref{cor:vertexdegree1}, we see that 
$$\tau(V_n)=2^{f_n}\cdot 3^{g_n}\cdot 5^{h_n},\;\;\; n\geq 2$$
where $f_n$, $g_n$, and $h_n$ are as claimed.
For $n=0$, since the $V_0$ graph is the complete graph on three vertices, $\tau(V_0)=3$ by Cayley's Formula, as desired. For $n=1$, from  \cite{MR2451619} the eigenvalues of $P_1$ are $\{\frac{5}{4},\frac{5}{4},\frac{3}{2},\frac{3}{2},\frac{3}{4},\frac{3}{4},0\}$ and using Corollary ~\ref{cor:vertexdegree1} for $n=1$, we apply Theorem~\ref{thm:matrixtree}
to see that $\tau(V_1)=2^2\cdot3^3\cdot5^2$, as desired.
\end{proof}
\begin{corollary} The asymptotic growth constant for the non-p.c.f. analog of the $\sierp$ Gasket is 
\begin{equation}
c=\frac{11\cdot log(2)}{10}+\frac{log(3)}{2}+\frac{log(5)}{5}
 \end{equation}
\end{corollary}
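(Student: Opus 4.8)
The plan is to read off the asymptotic complexity constant directly from the closed form for $\tau(V_n)$ given in Theorem~\ref{thm:nonpcf} and the vertex count in Lemma~\ref{thm:vertexdegrees1}. By definition the constant is $c=\lim_{n\to\infty}\frac{\log\tau(V_n)}{|V_n|}$, and Corollary~\ref{cor:asycomplexity} already guarantees the relevant $\limsup$ and $\liminf$ exist; for this fractal we will see that the limit itself exists and compute its value.

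First I would take logarithms in $\tau(V_n)=2^{f_n}\cdot 3^{g_n}\cdot 5^{h_n}$ to get $\log\tau(V_n)=f_n\log 2+g_n\log 3+h_n\log 5$, and substitute the explicit expressions $f_n=\frac{2}{25}\left(11\cdot 6^n-30n-11\right)$, $g_n=\frac{1}{5}\left(2\cdot 6^n+3\right)$, and $h_n=\frac{1}{25}\left(4\cdot 6^n+30n-4\right)$, together with $|V_n|=\frac{1}{5}\left(4\cdot 6^n+11\right)$ from Lemma~\ref{thm:vertexdegrees1}. Then I would divide and pass to the limit: each of $f_n$, $g_n$, $h_n$, and $|V_n|$ equals a fixed constant multiple of $6^n$ plus a correction of size $O(n)$, so dividing numerator and denominator by $6^n$ sends the linear and constant corrections to $0$ and leaves $c=\frac{\frac{22}{25}\log 2+\frac{2}{5}\log 3+\frac{4}{25}\log 5}{4/5}$. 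A one-line simplification of each coefficient, namely $\frac{22/25}{4/5}=\frac{11}{10}$, $\frac{2/5}{4/5}=\frac{1}{2}$, and $\frac{4/25}{4/5}=\frac{1}{5}$, yields $c=\frac{11\log 2}{10}+\frac{\log 3}{2}+\frac{\log 5}{5}$, as claimed.

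There is no genuine obstacle here: the entire content of the corollary is carried by Theorem~\ref{thm:nonpcf} and Lemma~\ref{thm:vertexdegrees1}, and what remains is the routine extraction of leading-order asymptotics. The only point deserving explicit mention is that the lower-order terms in $f_n$, $g_n$, $h_n$ grow only linearly in $n$, hence are $o(6^n)$ and contribute nothing in the limit; this is precisely what upgrades the $\limsup/\liminf$ statement of Corollary~\ref{cor:asycomplexity} to an honest limit in this example. (One could also phrase the argument using Corollary~\ref{cor:vertexdegree1} in place of Lemma~\ref{thm:vertexdegrees1} for the denominator, but only the leading term $\tfrac{4}{5}6^n$ of $|V_n|$ is needed.)
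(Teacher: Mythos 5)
Your proposal is correct and matches the paper's (implicit) argument: the corollary is stated as an immediate consequence of Theorem~\ref{thm:nonpcf} and the vertex count $|V_n|=\frac{1}{5}(4\cdot 6^n+11)$, obtained exactly as you do by comparing the leading $6^n$ coefficients of $f_n$, $g_n$, $h_n$ with that of $|V_n|$. The arithmetic checks out, so there is nothing further to add.
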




\subsection{Diamond Fractal}\label{section:diamond}
The diamond self-similar hierarchical lattice appeared as an example in several physics works, including \cite{MR706699}, \cite{MR734134}, and ~\cite{MR755653}.  
In \cite{MR2450694}, the authors modify the standard results for the unit interval $[0,1]$ to develop the spectral decimation method for this fractal, hence Theorem \ref{thm:maintheoremfull} still applies.
Figure~\ref{fig:diamond} shows the $V_1$ and $V_2$ networks for this.

\begin{figure}[h!]
\begin{center}
\epsfig{file=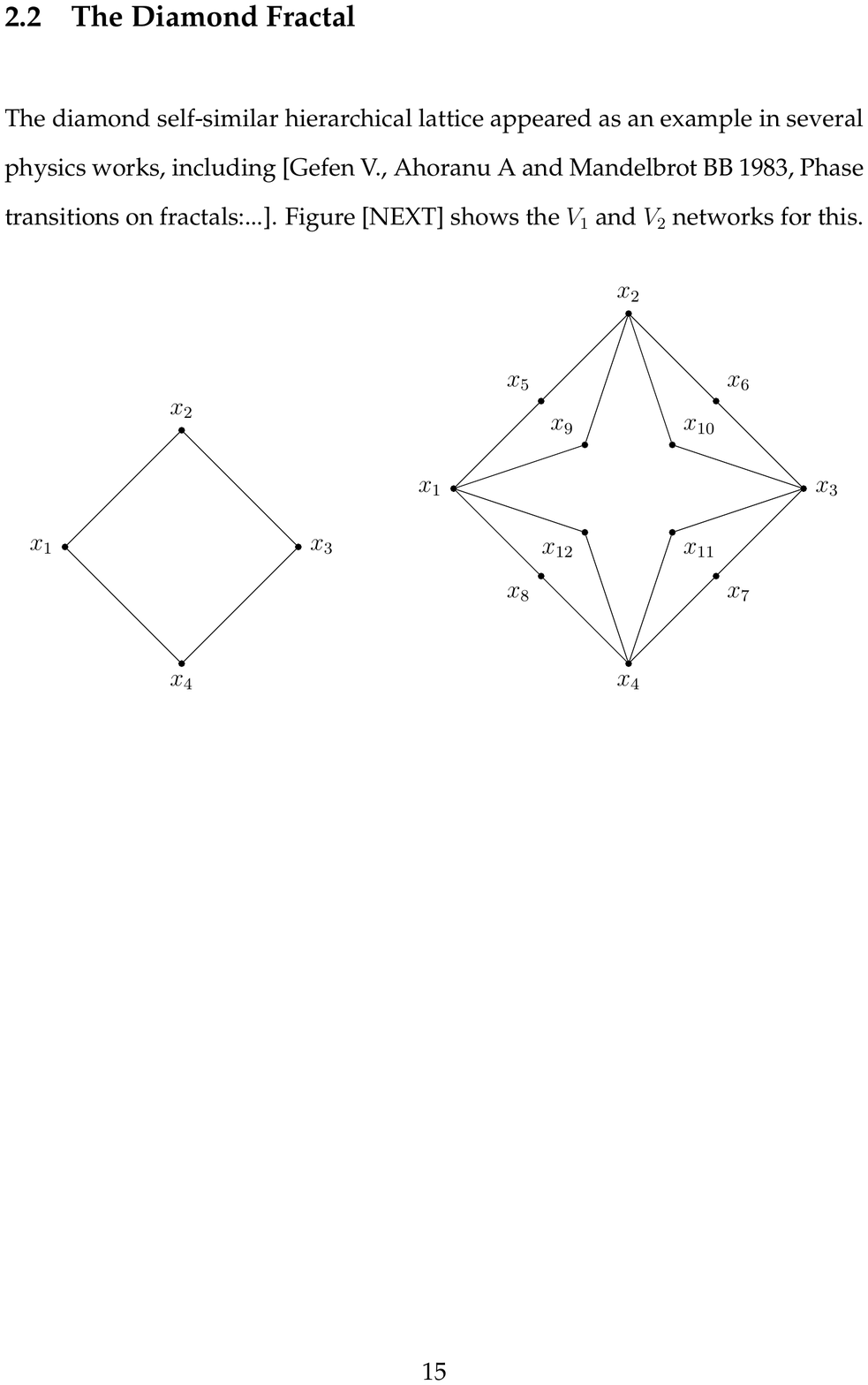,scale=.5}

\caption{The $V_1$ and $V_2$ network of the Diamond fractal.}
\end{center}
\label{fig:diamond}
\end{figure}


\begin{theorem}\label{thm:spandiamond} The number of spanning trees on the Diamond fractal at level $n$ is given by
$$\tau(V_n)=2^{\frac{2}{3}\left(4^{n}-1\right)}\;\;\;\;\;n\geq 1.$$
\end{theorem}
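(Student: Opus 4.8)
The plan is to apply Theorem~\ref{thm:maintheoremfull}, exactly as in the two preceding examples, so the first task is to extract the spectral-decimation data for the Diamond fractal. Here $V_0=K_2$ and $P_0=\bigl(\begin{smallmatrix}1&-1\\-1&1\end{smallmatrix}\bigr)$, so $\sigma(P_0)=\{0,2\}$, while $V_1$ is the $4$-cycle. Writing $P_1$ in the block form of Proposition~\ref{prop:existR} (with $A=I_2$, since $V_1$ has no boundary--boundary edge, and $D=I_2$, since its two interior vertices are nonadjacent) and forming the Schur complement, one finds $\phi(z)=\frac{1}{2(1-z)}$ and $R(z)=2-2(1-z)^2=4z-2z^2$. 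Thus $d=2$; the numerator is $P(z)=-2z^2+4z$ with $P_d=-2$, and $Q(z)=1$ with $Q(0)=1$, so $\frac{-Q(0)}{P_d}=\frac12$. Note that $R(0)=0$ and $\deg P>\deg Q$, so the hypotheses of Lemma~\ref{prop:rationals} hold, and the exceptional set is $E(P_0,P_1)=\sigma(D)\cup\{\phi=0\}=\{1\}$.

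Next I would run the induction of Proposition~\ref{prop:inductionsteps}. Since $\phi$ never vanishes but has a pole at the only point of $\sigma(D)$, no value meets case (2) or (8), so $A=\emptyset$. Because $R(2)=0$ and $2\notin E$, rule (1) gives $mult_n(2)=mult_{n-1}(0)=1$ for all $n$, hence $2\in B$; in fact $B=\{2\}$, with every nonzero eigenvalue of $P_n$ lying in some preiterate $R_{(-k)}(2)$. The exceptional value $z=1$ satisfies $R(1)=2$, with $\phi$ and $\phi R$ both having poles there, $R$ analytic, and $R'(1)=0$ --- this is case (6), so $mult_n(1)=2\cdot 4^{n-1}-|V_{n-1}|+2\,mult_{n-1}(2)$ (here $mult_D(1)=2$ and the decimation scaling factor is $4$); every further preiterate $R_{(-k)}(2)$, $k\ge 2$, avoids $E$, so its multiplicity descends via rule (1) to $mult_{n-k+1}(1)$. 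Writing $\beta_n^k$ for the resulting (normalized) multiplicity along $R_{(-k)}(2)$, I would assemble the vertex data for $V_n$ --- namely $|V_n|=\tfrac{2(4^n+2)}{3}$, with the two original vertices of degree $2^n$ and, for $1\le k\le n$, exactly $2\cdot 4^{k-1}$ vertices of degree $2^{n-k+1}$, so that $\tfrac{\prod d_j}{\sum d_j}$ is an explicit power of $2$ --- and substitute into (\ref{eqn:maintheoremfull}). By Lemma~\ref{prop:rationals} the eigenvalue product becomes $\prod_k\bigl(2\cdot(1/2)^{2^k-1}\bigr)^{\beta_n^k}$; summing the geometric-type series in all the exponents and simplifying should collapse the whole expression to $2^{\frac23(4^n-1)}$. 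The case $n=1$ I would check directly ($V_1=C_4$ has nonzero $P_1$-eigenvalues $1,1,2$, so $\tau(V_1)=\tfrac{16}{8}\cdot 2=4=2^{2}$), the value $\tau(V_0)=1$ being immediate.

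I expect the main obstacle to be the spectral-decimation bookkeeping at $z=1$: because $R'(1)=0$ one is forced into the delicate case (6) of Proposition~\ref{prop:inductionsteps}, and because $R_{(-1)}(2)=\{1\}$ is a double root some care is needed so that the multiplicity attached to $R_{(-k)}(\beta)$ meshes correctly with the multiset product of Lemma~\ref{prop:rationals}. If one prefers to sidestep spectral decimation altogether, there is a shorter, self-contained route worth recording: $V_n$ is obtained from $V_{n-1}$ by replacing every edge with a diamond $C_4$ on its two endpoints, and for that gadget both the number of spanning trees and the number of $2$-component spanning forests separating the two terminals equal $4$; hence, by deletion--contraction, replacing a single edge multiplies the number of spanning trees by $4$, so replacing all $4^{n-1}$ edges of $V_{n-1}$ yields $\tau(V_n)=4^{4^{n-1}}\tau(V_{n-1})$, and with $\tau(V_0)=1$ this telescopes to $\tau(V_n)=4^{(4^n-1)/3}=2^{\frac23(4^n-1)}$. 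I would present the spectral-decimation proof for uniformity with the surrounding examples and merely remark on this shortcut.
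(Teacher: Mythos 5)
Your main route is essentially the paper's: apply Theorem~\ref{thm:maintheoremfull} with the diamond's spectral-decimation data and the degree statistics of $V_n$. Your derived $R(z)=4z-2z^2$, $d=2$, $Q(0)=1$, $P_d=-2$, the vertex count $\frac{2\cdot 4^n+4}{3}$, and the degree distribution all agree with Lemma~\ref{thm:diamond} and Corollary~\ref{cor:diamond} (the paper simply quotes the decimation data from \cite{MR2450694} rather than rederiving $\phi$ and $R$). The one point of divergence is the bookkeeping of the spectrum: the paper records the data as $A=\{2\}$ with $\alpha_n=1$ and $B=\{1\}$ with $\beta_n^k=\frac{4^{n-k}+2}{3}$, so that every $R_{(-k)}(\beta)$ it multiplies over consists of simple preimages, whereas your choice $A=\emptyset$, $B=\{2\}$ routes everything through the double root $R_{(-1)}(2)=\{1,1\}$; since Lemma~\ref{prop:rationals} products are over the multiset of $d^k$ preimages, your exponents $\beta_n^k$ must be half the true eigenvalue multiplicities for $k\ge 1$ (they happen to stay integral, $\frac{4^{n-k+1}+2}{6}$). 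You flag this correctly, and carried through it yields the same eigenvalue product and the stated formula, but the paper's labelling avoids the issue entirely, so you should adopt it. Your closing remark, however, is a genuinely different and complete proof worth recording: for the two-terminal diamond gadget the number of spanning trees and the number of two-component spanning forests separating the terminals are both $4$, so replacing an edge by the gadget multiplies the spanning-tree count by exactly $4$ via deletion--contraction; hence $\tau(V_n)=4^{4^{n-1}}\tau(V_{n-1})$ and, with $\tau(V_0)=1$, $\tau(V_n)=4^{(4^n-1)/3}=2^{\frac{2}{3}(4^n-1)}$. This is shorter, entirely elementary, and needs no spectral information, at the cost of being special to this example rather than an instance of the general machinery of Theorem~\ref{thm:maintheoremfull}.
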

Before we begin the proof, we need a few results.
\begin{lemma}\label{thm:diamond}  The $V_n$ network of the Diamond fractal, for $n\geq 1$, has
$$\frac{\left(4+2\cdot 4^n\right)}{3}$$
vertices.  Among these vertices,
\begin{enumerate}[(i)]
\item $2\cdot 4^{n-k}$ have degree $2^k$ for $1\leq k\leq n-1$
\item $4$ have degree $2^n$.
\end{enumerate}
\end{lemma}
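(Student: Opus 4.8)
To prove Lemma~\ref{thm:diamond} the plan is to read off the replacement rule for the diamond from its self-similar structure, count edges and vertices by an elementary recursion, and then organize the degree computation by the level at which each vertex first appears.

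First I would record the construction. The boundary set $F_0$ has two points, so $V_0$ is a single edge; from Figure~\ref{fig:diamond} the $V_1$ graph is the ``diamond,'' i.e.\ the two boundary vertices together with two new interior vertices, joined so that $V_1$ is four edge-copies of $V_0$. By self-similarity $V_n$ is obtained from $V_{n-1}$ by replacing every edge $\{u,v\}$ with a fresh diamond on $u$, $v$ and two new vertices. In particular each edge of $V_{n-1}$ becomes four edges of $V_n$ and contributes exactly two new vertices. Hence $V_n$ has $4^n$ edges, and the number of vertices satisfies $|V_n| = |V_{n-1}| + 2\cdot 4^{n-1}$ with $|V_0|=2$; summing the geometric series gives $|V_n| = 2 + 2\cdot\frac{4^n-1}{3} = \frac{4+2\cdot 4^n}{3}$.

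Next comes the degree count, whose engine is the observation that passing from $V_{n-1}$ to $V_n$ exactly doubles the degree of every already-existing vertex: each of its incident edges is replaced by a diamond in which that vertex again has degree two. Thus a vertex present in $V_\ell$ with degree $\delta$ there has degree $\delta\cdot 2^{\,n-\ell}$ in $V_n$. The two boundary vertices have degree $1$ in $V_0$ and hence degree $2^n$ in $V_n$. For $\ell\ge 1$, the $2\cdot 4^{\ell-1}$ interior vertices born at level $\ell$ have degree $2$ when born, hence degree $2^{\,n-\ell+1}$ in $V_n$. Reindexing with $k=n-\ell+1$: for each $k$ with $1\le k\le n-1$ the vertices born at levels $\ell\in\{2,\dots,n\}$ give $2\cdot 4^{\ell-1}=2\cdot 4^{\,n-k}$ vertices of degree $2^k$, which is item~(i); the two vertices born at level $1$ (degree $2^n$) together with the two boundary vertices (degree $2^n$) give the $4$ vertices of degree $2^n$ of item~(ii). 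As a consistency check, $4+\sum_{k=1}^{n-1}2\cdot 4^{\,n-k} = 4 + 2\cdot\frac{4^n-4}{3} = \frac{4+2\cdot 4^n}{3} = |V_n|$.

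I do not expect a serious obstacle: the argument is pure bookkeeping once the replacement rule is fixed. The only delicate point is the base-case accounting — making sure the two original boundary vertices of $V_0$ are not counted among the vertices ``born at level $\ell\ge1$,'' and that they merge with the two level-$1$ interior vertices to explain the anomalous value $4$ in item~(ii); one should also note that item~(i) is vacuous and item~(ii) reduces to the four degree-$2$ vertices of the diamond when $n=1$.
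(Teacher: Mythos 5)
Your proposal is correct and follows essentially the same route as the paper: both describe the level-by-level birth of diamonds and vertices, observe that every existing vertex's degree doubles at each subsequent stage (the paper phrases this as gaining $2^k$ edges $k$ steps after birth), and reindex to obtain items (i) and (ii). The only cosmetic difference is that you count the two boundary vertices as coming from $V_0$ while the paper treats all four outer vertices as born at level $1$; since the lemma concerns $n\geq 1$, this changes nothing.
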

\begin{remark}  In \cite{MR2450694}, the number of vertices of $V_n$ is incorrect as stated in Theorem 7.1(ii).  We correct this here and provide a proof.
\end{remark}
\begin{proof}[Proof of Lemma~\ref{thm:diamond}] 
We first describe how the $V_n$ network is constructed, then prove the Lemma. 
 When $n=1$, $V_1$ has four vertices of degree 2 and one diamond, this diamond is the graph of $V_1$. We say these vertices and diamond are born at level 1. 

When $n=2$, from the diamond born on level 1, 4 diamonds are born. We say these diamonds are born on level 2. For each of the diamonds born on level 2, 2 vertices of degree 2 are born. We say these vertices are born on level 2. Using the notation $G=<V,E>$ where $G$ is the graph, $V$ is the graph's vertex set and $E$ is the graph's edge set. An example diamond born at level 2 is $<V,E>$, where 
\begin{equation*}
V=\{x_1,x_5,x_2,x_9\}
\end{equation*}
\begin{equation*}
E=\{x_1x_5,x_5x_2,x_2x_9,x_9x_1\}
\end{equation*}
 which gives birth to $x_5$ and $x_9$. Every vertex born on level 1 gains 2 more edges.

For $n\geq2$, from each diamond born on level $n-1$, 4 diamonds are born at level n. For each of the diamonds born on level n, 2 vertices of degree 2 are born at level n. Every vertex born on level $n-1$, gains 2 more edges. Every vertex born on level $n-2$, gains $2^2$ more edges. In general, every vertex born on level $n-k$, gains $2^k$ more edges for $1\leq k\leq n-1$.

From this construction, we see that at level n, for $n\geq 1$, there are $4^{k-1}$ diamonds born at level k, $1\leq k\leq n$, $2\cdot 4^{k-1}$ vertices born at level k, $2\leq k\leq n$ and 4 vertices born at level 1. Thus, the $V_n$ network has 
\begin{align*}
 4+ \sum_{k=2}^{n}2\cdot 4^{k-1}
&=\frac{(4+2\cdot4^n)}{3}  \textrm{ vertices, as desired.}
\end{align*}
In the $V_n$ network, the 4 vertices born at level 1 have degree
\begin{align*}
 2+\sum_{j=1}^{n-1}2^j = 2^n,
\end{align*}
which verifies item (ii) of the Proposition. \\
In the $V_n$ network, the $2\cdot 4^{k-1}$ vertices born on level k, $2\leq k\leq n$, have degree
\begin{align*}
 2+\sum_{j=1}^{n-k}2^j = 2^{n-k+1}. 
\end{align*}
changing indices, this verifies item (i) of the Lemma. 
\end{proof}

\begin{corollary}\label{cor:diamond}  For the $V_n$ network of the Diamond fractal, we have
\begin{equation}\label{eqn:pidiamond}
\frac{\displaystyle \prodd{i}{|V_n|}}{\displaystyle \sumd{i}{|V_n|}}=2^{\frac{1}{9}\left(2\cdot 4^{n+1}-6n-17\right)}.
\end{equation}
\end{corollary}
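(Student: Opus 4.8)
The plan is to substitute the degree data of Lemma~\ref{thm:diamond} into the left-hand side and evaluate two finite sums. Recall that $V_n$ has $2\cdot 4^{n-k}$ vertices of degree $2^k$ for $1\le k\le n-1$, together with $4$ vertices of degree $2^n$. The denominator is just the total degree of $V_n$, i.e. twice the number of edges, so first I would record
$$\sumd{i}{|V_n|}=\sum_{k=1}^{n-1}2\cdot 4^{n-k}\cdot 2^k+4\cdot 2^n=2^{2n+1},$$
the middle expression being a geometric series once we write $2\cdot 4^{n-k}\cdot 2^k=2^{2n-k+1}$. (Equivalently, one reads off from the construction in the proof of Lemma~\ref{thm:diamond} that $V_n$ has $4^n$ edges, each diamond being replaced by four diamonds.)

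Next I would compute the numerator by taking logarithms base $2$:
$$\log_2\!\left(\prodd{i}{|V_n|}\right)=\sum_{k=1}^{n-1}k\cdot 2\cdot 4^{n-k}+4n.$$
The one genuine computation here is the arithmetic--geometric sum $\sum_{k=1}^{n-1}k\,4^{n-k}$. Setting $j=n-k$ rewrites it as $n\sum_{j=1}^{n-1}4^j-\sum_{j=1}^{n-1}j\,4^j$; using $\sum_{j=1}^{n-1}4^j=\frac{4^n-4}{3}$ and the standard closed form $\sum_{j=1}^{n-1}j\,4^j=\frac{(3n-4)4^n+4}{9}$, one gets $\sum_{k=1}^{n-1}k\,4^{n-k}=\frac{4^{n+1}-12n-4}{9}$. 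Hence the exponent of $\prodd{i}{|V_n|}$ equals $\frac{2\left(4^{n+1}-12n-4\right)}{9}+4n=\frac{2\cdot 4^{n+1}+12n-8}{9}$.

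Finally I would subtract the exponent $2n+1$ of the denominator, obtaining
$$\frac{2\cdot 4^{n+1}+12n-8}{9}-(2n+1)=\frac{2\cdot 4^{n+1}-6n-17}{9},$$
which is exactly the claimed exponent of $2$. The only step that needs any care is the evaluation of $\sum_{j=1}^{n-1}j\,4^j$; everything else is bookkeeping with geometric series, and all the sums collapse correctly even when $n=1$ (the $\sum_{k=1}^{n-1}$ becomes empty). I would also note, as in Lemma~\ref{thm:diamond}, that the formula is asserted only for $n\ge 1$: for $n=1$ item (i) is vacuous and $V_1$ has four degree-$2$ vertices, so the ratio is $2^4/2^3=2$, matching $2^{\frac19(32-6-17)}=2^1$.
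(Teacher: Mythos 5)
Your computation is correct, and it is precisely the direct substitution of the degree data from Lemma~\ref{thm:diamond} that the paper leaves implicit (the corollary is stated there without a separate proof); both the edge count $\sum_i d_i = 2^{2n+1}$ and the exponent $\frac{1}{9}\left(2\cdot 4^{n+1}+12n-8\right)-(2n+1)=\frac{1}{9}\left(2\cdot 4^{n+1}-6n-17\right)$ check out, including the $n=1$ case.
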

We now return to a proof the the main theorem of this section.
\begin{proof}[Proof of Theorem~\ref{thm:spandiamond}]  We apply Theorem~\ref{thm:maintheoremfull}.  In \cite{MR2450694}, they carry out spectral decimation for the Diamond fractal.  In our language, they showed that
\begin{align*}
A&=\left\{2\right\}, \text{ and }
B=\left\{1\right\}.\\
\end{align*}
For $n\geq 1$, the following hold:
\begin{enumerate}[(I)]
\item $\alpha=2$, $\alpha_n=1$
\item $\beta=1$, \begin{equation*}\beta_n^k=\begin{cases} \frac{4^{n-k}+2}{3}&\;\;\;\;\;k=0,\ldots,n-1\\
0&\;\;\;\;\;k=n,\\
\end{cases}
\end{equation*}
\end{enumerate}
and
$$R(z)=2z(2-z).$$
So $d=2$, $Q(0)=1$, and $P_d=-2$.
We now use Equation~\ref{eqn:maintheoremfull} in Theorem~~\ref{thm:maintheoremfull} to calculate $\tau(V_n)$.
\begin{equation}\label{eqn:diamondalpha}
\prod_{\alpha\in A} \alpha^{\alpha_n}=2^1
\end{equation}
\begin{equation}\label{eqn:diamondbeta}
\begin{split}
\prod_{\beta\in B}&\left(\beta^{\sum_{k=0}^n\beta_n^k}\cdot\left(\frac{1}{2}\right)^{\sum_{k=0}^n\beta_n^k\left(2^k-1\right)}\right)
=2^{-\frac{1}{9}\left(2\cdot 4^n-6n-2\right)}
\end{split}
\end{equation}
the relevant summation is,
\begin{equation*}
\sum_{k=0}^{n-1}\left(\frac{4^{n-k}+2}{3}\right)\left(2^k-1\right)=\frac{1}{9}\left(2\cdot 4^n-6n-2\right).
\end{equation*}
Combining this with Corollary~\ref{cor:diamond}, we have that
$$\tau(V_n)=2^{\frac{2}{3}\left(4^{n}-1\right)}\;\;\;\;\;n\geq 1$$
as desired.
\end{proof}

\begin{corollary} The asymptotic growth constant for the Diamond fractal is 
\begin{equation}
c=log(2)
 \end{equation}
\end{corollary}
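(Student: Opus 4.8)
The plan is to substitute the two closed-form expressions we have already established directly into the definition of the asymptotic complexity constant and evaluate the resulting limit. By definition, the asymptotic growth constant is $c = \lim_{n\to\infty} \frac{\log(\tau(V_n))}{|V_n|}$, so everything reduces to knowing $\tau(V_n)$ and $|V_n|$ precisely, both of which are in hand.

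First I would recall from Theorem~\ref{thm:spandiamond} that $\tau(V_n) = 2^{\frac{2}{3}(4^n - 1)}$ for $n \geq 1$, so that $\log(\tau(V_n)) = \frac{2}{3}(4^n - 1)\log 2$. Next I would recall from Lemma~\ref{thm:diamond} that the $V_n$ network has $|V_n| = \frac{4 + 2\cdot 4^n}{3}$ vertices for $n \geq 1$. Forming the quotient and clearing the common factor of $3$ gives
\begin{equation*}
\frac{\log(\tau(V_n))}{|V_n|} = \frac{\frac{2}{3}(4^n-1)\log 2}{\frac{1}{3}(4 + 2\cdot 4^n)} = \frac{2(4^n - 1)}{2(4^n + 2)}\log 2 = \frac{4^n - 1}{4^n + 2}\log 2.
\end{equation*}

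Finally I would observe that $\frac{4^n - 1}{4^n + 2} \to 1$ as $n \to \infty$ (dividing numerator and denominator by $4^n$), so the limit exists and equals $\log 2$, which is the claimed value of $c$. There is no real obstacle here: the entire content of the corollary is the elementary limit computation above, and all the substantive work — the spanning-tree count and the vertex count, each of which required its own inductive analysis — has already been carried out in Theorem~\ref{thm:spandiamond} and Lemma~\ref{thm:diamond}. The only point worth a word of care is that the formulas for $\tau(V_n)$ and $|V_n|$ are stated for $n \geq 1$, which is harmless since the limit only sees the tail.
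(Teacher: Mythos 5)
Your computation is correct and is exactly the argument the paper intends (the paper states this corollary without proof, as an immediate consequence of Theorem~\ref{thm:spandiamond} and Lemma~\ref{thm:diamond}): the ratio $\frac{4^n-1}{4^n+2}\log 2$ tends to $\log 2$. Nothing further is needed.
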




\subsection{Hexagasket}\label{section:hexa}

The hexagasket, is also known as the hexakun, a polygasket, a 6-gasket, or a $(2,2,2)$-gasket, see \cite{MR2451619,Ki01, ASST03, BCF07, St06, Te08, TW05, TW06}. The $V_1$ network of the hexagasket is shown in the figure below. 
\begin{figure}[h!]
\begin{center}
\epsfig{file=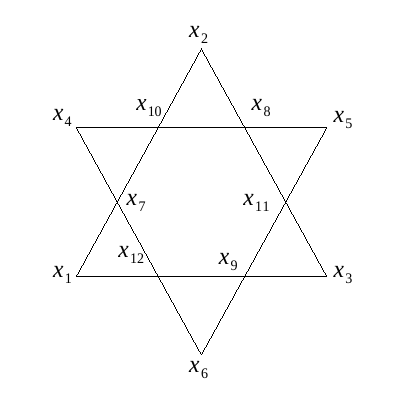,scale=1}
\\
\caption{The $V_1$ network of the Hexagasket.}
\label{fig:hexav1}
\end{center}
\end{figure}

\begin{theorem}\label{thm:hexa} The number of spanning trees on the Hexagasket at level $n$ is given by
$$\tau(V_n)=2^{f_n}\cdot3^{g_n}\cdot7^{h_n}\;\;\;\;\;n\geq 0.$$
where
\begin{align*}
f_n&=\frac{1}{225}\left(27\cdot6^{n+1}-100\cdot4^n-60n-62\right)\\
g_n&=\frac{1}{25}\left(4\cdot6^{n+1}+5n+1\right)\\
h_n&=\frac{1}{25}\left(6^n-5n-1\right).\\
\end{align*}
\end{theorem}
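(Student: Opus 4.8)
The plan is to mirror the treatment of the three earlier examples: first pin down the combinatorial data of the approximating graphs $V_n$ of the Hexagasket, then import the spectral decimation data, and finally substitute everything into Equation~\ref{eqn:maintheoremfull}.

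The first step is a degree lemma analogous to Lemma~\ref{thm:vertexdegrees1} and Lemma~\ref{thm:diamond}. Reading off Figure~\ref{fig:hexav1}, the $V_1$ network replaces the single triangle $V_0$ by six triangles arranged in a hexagonal ring: the three boundary corners persist, and new ``junction'' vertices are created where consecutive triangles meet. Iterating, at each level every triangle born at the previous level spawns six new triangles together with a fixed number of new junction vertices, while every vertex already present gains a fixed number of new incident edges that depends only on how many levels ago it was born. Unwinding these recursions by the same geometric-sum bookkeeping used in the proofs of Lemma~\ref{thm:vertexdegrees1} and Lemma~\ref{thm:diamond} gives a closed form for $|V_n|$ (with leading term a multiple of $6^n$) and a description of the degree sequence: the $3$ boundary corners with one degree, plus families of junction vertices indexed by the level $n-k$ at which they were born, each family of size a multiple of $6^{n-k}$ and degree a simple function of $k$. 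Summing $\log$ of the degrees over all vertices and dividing by $\sum_j d_j$ then yields, exactly as in Corollary~\ref{cor:vertexdegree1} and Corollary~\ref{cor:diamond}, a clean expression $\frac{\prod_j d_j}{\sum_j d_j}=2^{(\cdots)}\cdot 3^{(\cdots)}$.

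The second step is to quote the spectral decimation of the Hexagasket from \cite{MR2451619} (obtained via \cite{MR2450694} and our Proposition~\ref{prop:inductionsteps}) and re-express it in the notation of this paper: the finite sets $A$ and $B\subset\RR$, the multiplicity functions $\alpha_n$ and $\beta_n^k$ (piecewise in $k$, with the generic pieces proportional to $6^{n-k}$ up to additive constants and with small corrections for $k$ near $n$), and the rational function $R(z)=P(z)/Q(z)$ of degree $d$, along with $Q(0)$ and the leading coefficient $P_d$ of $P$. That $R$ satisfies the hypotheses of Lemma~\ref{prop:rationals} follows from the same references cited in the proof of Theorem~\ref{thm:maintheoremfull}. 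With this data, Equation~\ref{eqn:maintheoremfull} expresses $\tau(V_n)$ as a product of powers of the (finitely many) elements of $A\cup B$ and of $-Q(0)/P_d$, with exponents the finite sums $\sum_k\beta_n^k$ and $\sum_k\beta_n^k\frac{d^k-1}{d-1}$. Evaluating these sums — in particular the exponent of $-Q(0)/P_d$, a geometric-type sum in which a factor $d^k$ meets the cell count $6^{n-k}$ and which is responsible for the $4^n$ term in $f_n$ — and combining with the degree corollary produces, after collecting the prime factors $2$, $3$ and $7$, the exponents $f_n$, $g_n$, $h_n$ in the stated form. Finally, $n=0$ is handled directly by Cayley's formula on the complete graph $V_0$ on three vertices \cite{MR1813436}, and $n=1$ by computing the eigenvalues of $P_1$ and applying Theorem~\ref{thm:matrixtree}; checking that these match the formula extends the result to all $n\geq 0$.

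The main obstacle is bookkeeping, concentrated in two places. First, because the Hexagasket is only fully symmetric and not strongly symmetric, one must be careful about which of the eight cases of Proposition~\ref{prop:inductionsteps} governs each point of the exceptional set, and correspondingly about the boundary corrections to $\beta_n^k$ for $k$ close to $n$ and about whether any element of $A$ contributes; getting these wrong shifts the linear-in-$n$ parts of $f_n,g_n,h_n$. Second, the degree sequence of $V_n$ now involves several distinct families of junction vertices rather than the simple ``corners and centers'' dichotomy of the earlier examples, so the count entering the degree corollary needs care. Once both are pinned down the rest is mechanical evaluation of finitely many sums and prime-by-prime matching; I would also cross-check the final $f_n,g_n,h_n$ against a direct enumeration of spanning trees of $V_2$ as a sanity check.
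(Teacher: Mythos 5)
Your overall route is the same as the paper's: quote the spectral decimation data for the hexagasket from \cite{MR2451619}, feed it together with the degree data into Equation~(\ref{eqn:maintheoremfull}), evaluate the finitely many geometric sums, and settle $n=0,1$ by hand via Cayley's formula and Theorem~\ref{thm:matrixtree}. Your observation that the $4^n$ term in $f_n$ arises from the collision of $d^k$ (here $d=4$) with the cell count $6^{n-k}$ in the exponent of $-Q(0)/P_d$ is exactly what happens in the paper's computation.

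The one step of yours that would fail as written is the degree lemma. You model $V_n$ on the non-p.c.f.\ gasket and the Diamond fractal, where old vertices keep acquiring edges at every level, and you predict families of junction vertices indexed by the level at which they were born with degree a nontrivial function of $k$; you even assert the hexagasket's degree sequence is more complicated than the earlier ``corners and centers'' examples. It is in fact simpler: the hexagasket is p.c.f., each junction point lies in exactly two $1$-cells and at every later stage remains a boundary point of exactly two cells, so its degree is $4$ forever, and the three outer corners keep degree $2$. The degree data is just ``$\frac{6(6^n-1)}{5}$ vertices of degree $4$ and $\frac{12+3\cdot 6^n}{5}$ of degree $2$,'' which is what the paper imports from \cite{MR2451619} to obtain $\prod_j d_j/\sum_j d_j = 2^{3\cdot 6^n-n-1}\cdot 3^{-(n+1)}$ in Equation~(\ref{eqn:hexdegree}). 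If you carried through birth-level bookkeeping with genuinely growing degrees you would get the wrong powers of $2$ and $3$; executed honestly against Figure~\ref{fig:hexav1} the recursion collapses to the constant degrees above, and the rest of your plan then reproduces the paper's proof. (Your worry that full-but-not-strong symmetry changes which case of Proposition~\ref{prop:inductionsteps} governs each exceptional value is not an obstacle: the proposition is stated for fully symmetric structures, and that case analysis for the hexagasket is already carried out in \cite{MR2451619}, modulo the small corrections to the ranges of $k$ that the paper notes.)
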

\begin{proof}[Proof of Theorem~\ref{thm:hexa}]
We apply Theorem~\ref{thm:maintheoremfull}. From \cite{MR2451619} it is known that 
\begin{equation*}
 |V_n|=\frac{(6+9\cdot6^n)}{5} \;\;\;\;\; n\geq0,
\end{equation*}
of these vertices, 
$ \frac{6(6^n-1)}{5} \textrm{ have degree } 4,$
 and the remaining
 $\frac{(12 + 3\cdot6^n)}{5} \textrm{ have degree } 2.$
So we compute
\begin{equation}\label{eqn:hexdegree}
\frac{\prodd{j}{|V_n|}}{\sumd{j}{|V_n|}}=2^{(3\cdot6^n-n-1)}\cdot 3^{-(n+1)}
\end{equation}
for $n\geq 0$.
\\
In \cite{MR2451619}, they use a result from \cite{MR2450694} to carry out spectral decimation for the Hexagasket.
We note that in \cite{MR2451619} Theorem 6.1 (v) and (vi), the bounds on $k$ should be $0\leq k\leq n-1$ and in (vii) the bounds should be $0\leq k\leq n-2$. This can be verified using Table 2 in the same paper. In our language they showed that 
\begin{align*}
A&=\left\{\frac{3}{2}\right\},\text{ and }
B=\left\{1,\frac{1}{4},\frac{3}{4},\frac{3+\sqrt{2}}{4},\frac{3-\sqrt{2}}{4}\right\},
\end{align*}
and for $n\geq 2$ the following hold:
\begin{enumerate}[(I)]
\item\label{first} $\alpha=\frac{3}{2}$,  \;\;\;$\alpha_n=\frac{(6+4\cdot6^n)}{5}$,
\item\label{second} $\beta=1$,
\begin{equation*}\beta_n^k=\begin{cases} 1&\;\;\;\;\;k=0,\ldots,n-1\\
0&\;\;\;\;\;k=n,\\
\end{cases}
\end{equation*}
\item\label{third} $\beta=\frac{1}{4},\frac{3}{4}$,
\begin{equation*}\beta_n^k=\begin{cases} \frac{(6+4\cdot6^{n-k-1})}{5}&\;\;\;\;\;k=0,\ldots,n-1\\
0&\;\;\;\;\;k=n,\\
\end{cases}
\end{equation*}
\item\label{fourth} $\beta=\frac{3+\sqrt{2}}{4},\frac{3-\sqrt{2}}{4}$,
\begin{equation*}\beta_n^k=\begin{cases} \frac{(6^{n-k-1}-1)}{5}&\;\;\;\;\;k=0,\ldots,n-2\\
0&\;\;\;\;\;k=n-1,n,\\
\end{cases}
\end{equation*}

\end{enumerate}

\begin{equation*}
R(z)=\frac{2z(z-1)(7-24z+16z^2)}{(2z-1)}.
\end{equation*}
So $d=4$, $Q(0)=-1$ and $P_d=32$.
\\
We now use equation~\ref{eqn:maintheoremfull} in Theorem~\ref{thm:maintheoremfull} to calculate $\tau(V_n)$. 
 The relevant sums are
\begin{equation}
 \sum_{k=0}^{n-1}\frac{(4^k-1)}{3}= \frac{(4^n-3n-1)}{9}
\end{equation}
\begin{equation}
 \sum_{k=0}^{n-1}\frac{(6+4\cdot6^{n-k-1})}{5}=\frac{2\cdot(2\cdot6^n +15n-2)}{25}
\end{equation}
\begin{equation}
 \sum_{k=0}^{n-1}\frac{(6+4\cdot6^{n-k-1})}{5}\frac{(4^k-1)}{3}=\frac{(6^{n+1}-30n-6)}{75}
\end{equation}
\begin{equation}
 \sum_{k=0}^{n-2}\frac{(6^{n-k-1}-1)}{5}=\frac{(6^n-5n-1)}{25}
\end{equation}
\begin{equation}
  \sum_{k=0}^{n-2}\frac{(6^{n-k-1}-1)}{5}\frac{(4^k-1)}{3}=\frac{(9\cdot6^n-25\cdot4^n+30n+16)}{450}
\end{equation}
Combining these using equations ~\ref{eqn:maintheoremfull} and ~\ref{eqn:hexdegree}, after simplifying we get 

$$\tau(V_n)=2^{f_n}\cdot3^{g_n}\cdot7^{h_n}\;\;\;\;\;n\geq 2.$$
Where $f_n,g_n,$ and $h_n$ are as claimed. 
\\
For n=1, equation \ref{eqn:hexdegree} still holds and from \cite{MR2451619} we know the eigenvalues of the probabilistic graph Laplacian on $V_1$ are \{$1,\frac{1}{4} ,\frac{1}{4},\frac{3}{4},\frac{3}{4},\frac{3}{2},\frac{3}{2},\frac{3}{2},\frac{3}{2},\frac{3}{2},\frac{3}{2},0$\}.
So by Theorem~\ref{thm:matrixtree}, we get that $\tau(V_1)=2^2\cdot 3^6$, thus the theorem holds for $n=1$.  The $V_0$ network is the complete graph on 3 vertices, thus $\tau(V_0)=3$. Hence the theorem holds for all $n\geq 0$.
\end{proof}
\begin{corollary} The asymptotic growth constant for the Hexagasket is 
\begin{equation}
c=\frac{2\cdot log(2)}{5}+\frac{8\cdot log(3)}{15}+\frac{log(7)}{45}
 \end{equation}
\end{corollary}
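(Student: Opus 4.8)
The plan is to substitute the closed forms from Theorem~\ref{thm:hexa} directly into the definition of the asymptotic complexity constant. Writing $log(\tau(V_n)) = f_n\,log(2) + g_n\,log(3) + h_n\,log(7)$ and recalling from the proof of Theorem~\ref{thm:hexa} that $|V_n| = \frac{6+9\cdot 6^n}{5}$, the quantity to be analyzed is
$$\frac{log(\tau(V_n))}{|V_n|} = \frac{f_n\,log(2) + g_n\,log(3) + h_n\,log(7)}{(6+9\cdot 6^n)/5}.$$

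First I would isolate the leading-order behavior of each exponent. Since $6^n$ dominates $4^n$ and every polynomial in $n$, we have $f_n \sim \frac{27\cdot 6^{n+1}}{225} = \frac{18}{25}\,6^n$, $g_n \sim \frac{4\cdot 6^{n+1}}{25} = \frac{24}{25}\,6^n$, and $h_n \sim \frac{1}{25}\,6^n$ as $n\to\infty$; likewise $|V_n| \sim \frac{9}{5}\,6^n$. Dividing numerator and denominator by $6^n$ and letting $n\to\infty$ sends every lower-order contribution to zero, which leaves
$$c = \frac{\frac{18}{25}log(2) + \frac{24}{25}log(3) + \frac{1}{25}log(7)}{9/5}.$$
Simplifying the three coefficients via $\frac{18/25}{9/5} = \frac{2}{5}$, $\frac{24/25}{9/5} = \frac{8}{15}$, and $\frac{1/25}{9/5} = \frac{1}{45}$ then gives the asserted value of $c$.

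To make this rigorous rather than heuristic, I would record explicit error bounds: each of $f_n - \frac{18}{25}6^n$, $g_n - \frac{24}{25}6^n$, and $h_n - \frac{1}{25}6^n$ is a fixed affine-plus-$4^n$ expression, hence $O(4^n)$, and $|V_n| - \frac{9}{5}6^n$ is constant, so the ratio differs from its limit by a quantity that is $O(4^n/6^n) = O\!\left((2/3)^n\right) \to 0$. There is no genuine obstacle here; the only care needed is the elementary bookkeeping of which terms are asymptotically negligible together with a check of the arithmetic in reducing the fractions, exactly as in the corresponding computations for the $\sierp$ Gasket and its non-p.c.f. analog earlier in this section.
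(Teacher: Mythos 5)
Your computation is correct and is exactly the (unstated) argument the paper intends: the corollary is presented as an immediate consequence of Theorem~\ref{thm:hexa}, obtained by dividing $\log\tau(V_n)=f_n\log 2+g_n\log 3+h_n\log 7$ by $|V_n|=\tfrac{6+9\cdot 6^n}{5}$ and extracting the leading $6^n$ terms. Your coefficient reductions $\tfrac{18/25}{9/5}=\tfrac{2}{5}$, $\tfrac{24/25}{9/5}=\tfrac{8}{15}$, $\tfrac{1/25}{9/5}=\tfrac{1}{45}$ all check out, and the $O((2/3)^n)$ error bound makes the limit rigorous.
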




\bibliography{AnemaThesis_newbib}
\end{document}